\newtheorem{thm}{Theorem}[section]
\newtheorem{prop}[thm]{Proposition}
\newtheorem{lemma}[thm]{Lemma}
\newtheorem{remark}{Remark}
\newtheorem{defi}[thm]{Definition}
\newcommand\coeff[1]{c_{#1}}
\newcommand\Down{\texttt{D}\xspace}
\newcommand\Up{\texttt{U}\xspace}
\newcommand\Left{\texttt{L}\xspace}
\newcommand\Right{\texttt{R}\xspace}
\newcommand\Sap{\textrm{SAP}}
\newcommand\cornerSAP{
\begin{tikzpicture}[x=2pt,y=2pt]
\draw(0,0)--(1,0)--(1,2)--(-1,2)--(-1,1)--(0,1)--(0,0);
\end{tikzpicture}}
\def\ps@pprintTitle{%
  \let\@oddhead\@empty
  \let\@evenhead\@empty
  \let\@oddfoot\@empty
  \let\@evenfoot\@oddfoot
}
\begin{document}

\begin{frontmatter} 

\title{Fast construction of self-avoiding polygons and efficient evaluation of closed walk fractions on the square lattice}

\author[inst1]{Jean Fromentin}
\ead{jean.fromentin@univ-littoral.fr}
\author[inst1]{Pierre-Louis Giscard}
\ead{giscard@univ-littoral.fr}
\author[inst1]{Yohan Hosten}
\ead{yohan.hosten@math.cnrs.fr}
\affiliation[inst1]{organization={Universit\'e du Littoral C\^ote d'Opale, UR 2597, LMPA, Laboratoire de Math\'ematiques Pures et Appliqu\'ees Joseph Liouville},
addressline={50 rue F. Buisson},
postcode={F-62100},
city={Calais},
country={France}}

\begin{keyword}Self-avoiding polygon, computational combinatorics, loop-erasing, discrete version of the Schramm-Loewner Evolution, infinite resistor lattice\end{keyword}
	
\begin{abstract}
We build upon a recent theoretical breakthrough by employing novel algorithms to accurately compute the fractions $F_p$ of all closed walks on the infinite square lattice whose the last erased loop corresponds is any one of the $762, 207, 869, 373$ self-avoiding polygons $p$ of length at most 38. Prior to this work, only 6 values of $F_p$ had been calculated in the literature. The main computational engine uses efficient algorithms for both the construction of self-avoiding polygons and the precise evaluation of the lattice Green's function. Based on our results, we propose two conjectures: one regarding the asymptotic behavior of sums of $F_p$, and another concerning the value of $F_p$ when
$p$ is a large square. We provide strong theoretical arguments supporting the second conjecture. Furthermore, the algorithms we introduce are not limited to the square lattice and can, in principle, be extended to any vertex-transitive infinite lattice. In establishing this extension, we resolve two open questions related to the triangular lattice Green's function.
\end{abstract}

\end{frontmatter}

\section{Introduction}
In a series of seminal works on random generation of self-avoiding paths, G. Lawler introduced the procedure of chronological loop-erasing from a random walk \cite{Lawler1980, Lawler1999}. First conceived in the context of percolation theory to randomly generate simple paths--walks where all vertices are distinct--from a sample of random walks, the procedure consists of a chronological removal of cycles (called loops in Lawler's original work) as one walks along on the graph: consider for instance the complete graph on 4 vertices and label these with integers 1 through 4. Walking along the path $1\to 2\to 1 \to 3\to 4 \to 3\to 1\to 3$ on the graph and removing cycles whenever they appear, we are left with the simple path $1\to 3$  after having successively `erased' the cycles $1\to 2\to 1$, then  $3\to 4 \to 3$ and finally  $1\to 3 \to 1$. Note how $1\to 3\to 1$ does not appear contiguously in the original walk. Indeed, the edges traversed by the last erased loop might be very far apart from one another in the original walk.
Once terminated, Lawler's loop-erasing has eliminated a set of cycles, all of whose internal vertices are distinct, leaving a possibly trivial walk-skeleton behind. If the initial walk was itself a cycle, this skeleton is the empty walk on the initial vertex (also called length-0 walk) and otherwise it is a simple path (also called self-avoiding path). 
 This procedure yields a natural probability measure on self-avoiding paths: it is the probability of obtaining such a path as the self-avoiding skeleton left after erasing all loops from a random walk. For Brownian walks on infinite planar lattices under the continuum limit where edge lengths are sent to 0, this procedure is now known to produce a well-defined, conformally invariant probability distribution on self-avoiding paths called the Schramm-Loewner Evolution (SLE) with parameter $\kappa=2$ \cite{LawlerSLE2, Schramm2000}. Used on closed random walks (i.e. random walks with identical start and end points) the same procedure leads again to SLE$_2$ law on self-avoiding polygons. Here the self-avoiding polygon generated from the closed random walk is the very last loop eliminated by the procedure. While much effort has been expanded in the study of SLE, the equivalent discrete problems, that is on infinite lattices with \textit{finite edge size} have not been studied in nearly as much details and are comparatively less understood. 
 
 Consider an infinite, vertex transitive lattice with edges of unit size. The quantity that plays the role of SLE$_2$ in this discrete setting is the fraction $F_p$ of all closed walks whose last erased loop is a chosen self-avoiding polygon $p$. By definition, $0<F_p<1$. Furthermore, since the last erased loop of a walk is unique, the sum of $F_p$ over all self-avoiding polygons $p$ is 1,
 \begin{equation}\label{SumOne}
 \sum_{p:\,\text{SAP}} F_p=1,
 \end{equation}
 confirming that the $F_p$ have a probabilistic interpretation \cite{GISCARD2021}.
 Note that in this sum polygon orientation is retained and the starting point of a polygon is fixed so that, e.g., on the square lattice the $1\times 1$ square is counted 8 times.
 Beyond these basic assertions, very few results have been established concerning the $F_p$ numbers. In terms of explicit values, mappings to Abelian sand-pile models have permitted the calculation of $F_p$ for only the six shortest self-avoiding polygons on the infinite square lattice, owing to the mapping's complexity and ensuing lengthy computations \cite{Majumdar1991, Manna1992}. Regarding sums of $F_p$ values, it is not known how fast the sum of Eq.~(\ref{SumOne}) converges to 1 as the length $\ell$ of the self-avoiding polygons $p$ considered goes to infinity $\ell\to\infty$. Yet, one of the authors has shown in \cite{GISCARD2021} that this question is tied to the long open problem of determining the asymptotic growth of the number of self-avoiding polygons of length $\ell\to\infty$. 

In this work, we build on a recently derived analytical formula for the fractions $F_p$ \cite{GISCARD2021}, employing advanced computational techniques and novel algorithms to explicitly construct all $762,207,869,373$ self-avoiding polygons of length at most 38 on the infinite square lattice and evaluate their corresponding $F_p$. Additionally, we develop an algorithm to accurately compute individual $F_p$ values for self-avoiding polygons of length up to 1000. All source code is available for download \cite{GitHub}.
Thanks to these progresses we can conjecture that the sum in Eq.~(\ref{SumOne}) converges to 1 as follows,
$$
\sum_{\ell'\leq \ell} \sum_{p\,\in\, \text{SAP}_{\ell'}} F_p=1-\ell^{-3/5}+O(\ell^{-1}).
$$
where $\text{SAP}_{\ell'}$ is the set of self-avoiding polygons of length $\ell'$ and starting from a fixed vertex $O$ on the infinite square lattice. 
In addition, we can also conjecture that the polygon of length $4L$ with the largest $F_p$ value among all polygons of the same length is the $L\times L$ square $\text{Sq}_{L\times L}$. Furthermore we present solid, though incomplete, analytical arguments supporting the conjecture that 
$$
F_{\text{Sq}_{L\times L}} = \left(\sqrt{2}-1\right)^{4 L}+O(\text{polynom}(L)),
$$
which is also supported by numerical results. 
For the sake of concreteness we work on the square lattice, but our methods are generally valid on all vertex-transitive lattices. In the final section, we solve two open problems pertaining to the Green's function of the triangular lattice.

\subsection{Fraction $F_p$ of closed walks with last erased polygon $p$}
Let $G$ be an infinite vertex-transitive lattice, $\mathsf{A}_G$ its adjacency matrix and $\Lambda$ its unique dominant eigenvalue. Let $p$ be a self-avoiding polygon (SAP) or self-avoiding walk (SAW) on $G$ of length $\ell(p)$. Let $\mathcal{N}(p)$ be the distance-one neighborhood of $p$, that is, the set of vertices of $G$ that are at distance \textit{at most} one from any vertex visited by $p$ on $G$ (see Fig.~\ref{SAP}). Let $E_{\mathcal{N}(p)}$ be the set of all edges of $G$ with both ends in $\mathcal{N}(p)$. Let $\mathsf{B}_p$ be the adjacency matrix of the graph $G_{\mathcal{N}(p)}:=(\mathcal{N}(p),\,E_{\mathcal{N}(p)})$. This graph is finite provided $\ell(p)<\infty$ and so its adjacency matrix is of finite size $\mathcal{N}(p)\times \mathcal{N}(p)$. In addition, the size of this neighborhood cannot be too-large as compared to $\ell(p)$. Indeed, since $G$ is vertex-transitive it is regular and its dominant eigenvalue is the degree of all its vertices. This implies that $|\mathcal{N}(p)|\leq (\Lambda-1)\ell(p)$.

Let $\mathsf{R}(z):=(\mathsf{I}-z\mathsf{A}_G)^{-1}$ be the resolvent of $\mathsf{A}_G$ and let $\mathsf{P}_\Lambda$ be the projector onto the eigenspace of $\mathsf{A}_G$ associated with eigenvalue $\Lambda$. Define \cite{GISCARD2021}
 \begin{equation}\label{DefC}
 \mathsf{C} := \lim_{z\to1/\Lambda^{-}}~(\mathsf{I}-\mathsf{P}_\Lambda)\mathsf{R}(z).
 \end{equation} 
This matrix is directly related with the matrix $\mathsf{r}$ describing the resistance between pairs of points on the infinite resistor lattice with structure $G$ and with each edge representing a resistor of 1~Ohm. On infinite regular lattices we have the relation \cite{Klein1993} 
 \begin{equation}\label{CRrelation}
     \mathsf{C} = -\frac{1}{2}\mathsf{r}.
 \end{equation} 
 The value $\mathsf{C}_{O,v_i}$ between a vertex $v_i$ of the lattice and the origin is the difference of potentials between the origin and that vertex; while the resistance between these points is, by symmetry, half the potential difference between them. 
Research on the infinite resistor lattice problem \cite{Atkinson1999,Cserti2000,Cserti2011} together with Equation~(\ref{CRrelation}) mean that $\mathsf{C}$ is known for most vertex-transitive lattices. On the square lattice, $\mathsf{C}$  is explicitly given by
\begin{equation}
\mathsf{C}_{v_i,v_j}=-\frac{1}{\pi}\int_{0}^{\infty}\frac{1}{\tau}\left(1-\left(\frac{\tau-\mathbbm{i}}{\tau+\mathbbm{i}}\right)^{x_{ij}-y_{ij}}\left(\frac{\tau-1}{\tau+1}\right)^{x_{ij}+y_{ij}}\right)d\tau,\label{Cint}
\end{equation}
where $\mathbbm{i}^2=-1$, $x_{ij} = |x_{v_i} - x_{v_j}|$ and $y_{ij}=|y_{v_i} - y_{v_j}|$ are the distance along $x$ and $y$ between vertices $v_i$ and $v_j$, respectively. In this expression we have index entries of the matrix $\mathsf{C}$ by the lattice vertices $v_i$. While, in principle, this formula provides $\mathsf{C}$ explicitly, it is ill-suited to large scale numerical computations owing to the integration. We will present a method to bypass this issue in Section~\ref{Cmatrix}. 

Let $\mathsf{C}_p:=\mathsf{C}|_{\mathcal{N}(p)}$ be the restriction of $\mathsf{C}$ to the distance-one neighborhood of $p$. In other terms, $(\mathsf{C}_p)_{v_i,v_j}:=\mathsf{C}_{v_i,v_j}$ if and only if $v_i,v_j\in\mathcal{N}(p)$. The matrix $\mathsf{C}_p$ is, by definition, finite and of size $\mathcal{N}(p)\times \mathcal{N}(p)$.


\begin{thm}[\cite{GISCARD2021}]\label{InfiniteSieve}
Let $p$ be a self-avoiding polygon on $G$ and let $\mathsf{Id}$ be the $\mathcal{N}(p)\times \mathcal{N}(p)$ identity matrix. 
Let $\mathrm{deg}_p=\textrm{diag}(\mathsf{B}_p^2)$ be the column vector of vertex degrees on $G_{\mathcal{N}(p)}$ and let $\textbf{1}$ be the $\mathcal{N}(p)\times 1$ column vector full of 1.
 Then the asymptotic fraction  $0<F_p<1$ of all closed walks on the infinite lattice $G$ whose last erased loop is $p$ is well defined and given by
\begin{align}\label{Fpres}
     F_p &=\frac{1}{\Lambda^{\ell(p)}}\times \frac{1}{\Lambda}\,\mathrm{deg}_p^{T}\!.\,\mathrm{adj}\!\left(\mathsf{Id}+\frac{1}{\Lambda}\mathsf{C}_p.\mathsf{B}_p\right)\!.\textbf{1},     
\end{align}
where $\mathrm{adj}(\mathsf{M})$ designates the adjugate of a matrix $\mathsf{M}$. Furthermore $F_p\in\mathbb{Q}[\chi]$, with $\chi$ an irrational number depending on the lattice ($\chi=1/\pi$ for the square lattice).

 \end{thm}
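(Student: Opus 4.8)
The plan is to recast $F_p$ combinatorially, pass to a generating function in the walk length $z$, and read off the limit as $z\to1/\Lambda^{-}$. First I would establish the combinatorial heart of the matter: writing $p=(O=u_0,u_1,\dots,u_{\ell(p)-1},u_0)$ with $O$ the fixed base vertex, a closed walk $w$ at $O$ has $p$ as its last erased loop if and only if $w=w'\cdot e$, where $e$ is the edge of $p$ returning to $O$ and the chronological loop-erasure of $w'$ equals exactly the self-avoiding path $\lambda=(u_0,\dots,u_{\ell(p)-1})$ obtained from $p$ by deleting $e$. This is read off the stack implementation of loop-erasure: for a genuine polygon the final loop removed is the one closed by the very last step, and it coincides with the running loop-erased walk together with that last edge. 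Then I would invoke the classical description of walks with prescribed loop-erasure $\lambda$: they are precisely the concatenations $w'=L_0\,(u_0u_1)\,L_1\,(u_1u_2)\cdots(u_{\ell(p)-2}u_{\ell(p)-1})\,L_{\ell(p)-1}$, where each $L_i$ is an \emph{arbitrary} closed walk at $u_i$ in the lattice with $u_0,\dots,u_{i-1}$ deleted. Summing $z^{\ell(w)}$ therefore factorises,
\[
A_p(z):=\sum_{w}z^{\ell(w)}=z^{\ell(p)}\prod_{i=0}^{\ell(p)-1}\Bigl[\bigl(\mathsf{I}-z\,\mathsf{A}_{G\setminus\{u_0,\dots,u_{i-1}\}}\bigr)^{-1}\Bigr]_{u_i,u_i},
\]
the sum running over closed walks $w$ at $O$ whose last erased loop is $p$; and by the telescoping Schur-complement (Jacobi minor) identity the product equals the principal minor $\det\bigl(\mathsf{R}(z)|_{V(p)}\bigr)$ of the resolvent restricted to the set $V(p)$ of the $\ell(p)$ vertices of $p$.

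Next I would identify the asymptotic fraction with $\lim_{z\to1/\Lambda^{-}}A_p(z)/T(z)$, where $T(z)=[\mathsf{R}(z)]_{O,O}$ is the generating function of all closed walks at $O$; this is an Abelian-type step, both coefficient sequences being of order $\Lambda^{n}$ up to a slowly varying factor. Writing $\mathsf{R}(z)|_{V(p)}=t(z)\,\textbf{1}\,\textbf{1}^{T}+\mathsf{C}|_{V(p)}+o(1)$, where the singular (or, for transient $G$, merely dominant) part is the scalar-valued rank-one contribution of $\mathsf{P}_\Lambda$ and the remainder is $\mathsf{C}$ restricted to $V(p)$ by the very definition~\eqref{DefC}, the rank-one determinant identity $\det(\mathsf{M}+t\,\textbf{1}\,\textbf{1}^{T})=\det\mathsf{M}+t\,\textbf{1}^{T}\mathrm{adj}(\mathsf{M})\,\textbf{1}$ yields $\det\bigl(\mathsf{R}(z)|_{V(p)}\bigr)=t(z)\,\textbf{1}^{T}\mathrm{adj}\bigl(\mathsf{C}|_{V(p)}\bigr)\textbf{1}+O(1)$ whereas $T(z)=t(z)+O(1)$. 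Dividing gives $F_p=\Lambda^{-\ell(p)}\,\textbf{1}^{T}\mathrm{adj}\bigl(\mathsf{C}|_{V(p)}\bigr)\textbf{1}$, already making the rationality claim transparent: $\textbf{1}$ is integral, $1/\Lambda\in\mathbb{Q}$, $\mathrm{adj}$ is polynomial in the entries, and those of $\mathsf{C}$ --- equivalently the resistances of the resistor lattice --- lie in $\mathbb{Q}[\chi]$, the integral~\eqref{Cint} being a $\mathbb{Q}$-combination of $1$ and $1/\pi$ for the square lattice (a classical property of the planar Green's function).

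To reach the displayed form~\eqref{Fpres} I would re-express the lattice resolvent on the larger set $\mathcal{N}(p)$ through the finite adjacency $\mathsf{B}_p$ of $G_{\mathcal{N}(p)}$ and the matrix $\mathsf{C}_p=\mathsf{C}|_{\mathcal{N}(p)}$ encoding the rest of the lattice --- a self-energy/Schur-complement relation, which is exactly what makes $\mathcal{N}(p)$ (rather than $V(p)$) the natural carrier and the occurring matrix integer-valued and sparse --- and then use $\mathsf{B}_p\textbf{1}=\mathrm{deg}_p$ together with $\det(\mathsf{Id}+\mathsf{X}\mathsf{Y})=\det(\mathsf{Id}+\mathsf{Y}\mathsf{X})$ and $\mathsf{B}_p\,\mathrm{adj}(\mathsf{Id}+\mathsf{X}\mathsf{B}_p)=\mathrm{adj}(\mathsf{Id}+\mathsf{B}_p\mathsf{X})\,\mathsf{B}_p$ to transform $\textbf{1}^{T}\mathrm{adj}(\mathsf{C}|_{V(p)})\textbf{1}$ into $\tfrac1\Lambda\,\mathrm{deg}_p^{T}\mathrm{adj}\bigl(\mathsf{Id}+\tfrac1\Lambda\mathsf{C}_p\mathsf{B}_p\bigr)\textbf{1}$. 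The bound $0<F_p<1$ then follows from $\sum_pF_p=1$ and the fact that for each $n\equiv\ell(p)\pmod 2$ with $n\ge\ell(p)$ there is at least one closed walk of length $n$ at $O$ whose last erased loop is $p$.

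The genuine obstacle is the limit step on recurrent lattices such as $\mathbb{Z}^2$: there $\Lambda$ lies in the continuous spectrum, $\mathsf{P}_\Lambda$ is not a bona fide spectral projector, and one must (i) prove that $(\mathsf{I}-\mathsf{P}_\Lambda)\mathsf{R}(z)$ converges entrywise to a finite matrix as $z\to1/\Lambda^{-}$, which is precisely where the resistor-lattice results underlying~\eqref{Cint} enter; (ii) prove that the divergent part of $\mathsf{R}(z)$, restricted to any finite vertex set, is genuinely $t(z)\,\textbf{1}\,\textbf{1}^{T}$ for a single scalar $t(z)$, i.e.\ has entries independent of the two vertex indices; and (iii) make the Abelian/Tauberian passage from the ratio of generating functions to the true asymptotic fraction rigorous, including the existence of the limit. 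By comparison the final reorganisation onto $\mathcal{N}(p)$ is only bookkeeping, though tracking the extra factor $1/\Lambda$ in~\eqref{Fpres} and the non-symmetry of $\mathsf{C}_p\mathsf{B}_p$ calls for a little care.
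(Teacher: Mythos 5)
The paper itself contains no proof of Theorem~\ref{InfiniteSieve}: the result is imported from \cite{GISCARD2021}, and the only internal trace of its derivation is the finite-lattice determinant ratio of Eq.~(\ref{Fpres1}) in \S\ref{AnalyticalTriumph}. Your argument passes through exactly that expression --- Lawler's decomposition of walks with prescribed loop-erasure, the telescoping product of diagonal Green's functions and Jacobi's complementary-minor identity give $A_p(z)=z^{\ell(p)}\det(\mathsf{I}-z\mathsf{A}_{G\setminus p})/\det(\mathsf{I}-z\mathsf{A}_G)$ --- so your combinatorial core is sound and consistent with what the paper reports of the theorem's origin. The rank-one extraction is also correct as far as it goes: as a check, your intermediate formula $F_p=\Lambda^{-\ell(p)}\,\mathbf{1}^{T}\mathrm{adj}\bigl(\mathsf{C}|_{V(p)}\bigr)\mathbf{1}$ applied to the unit square (a $4\times4$ circulant built from $c_{0,1}=-1$ and $c_{1,1}=-4/\pi$) gives $(\pi-2)/(2\pi^{3})\approx 1.8409\times10^{-2}$, matching Table~\ref{SquareTable}.

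Two steps are genuinely incomplete. First, the conversion from your $V(p)$-indexed adjugate to the $\mathcal{N}(p)$-indexed formula~(\ref{Fpres}) is not the bookkeeping you claim: $\det(\mathsf{Id}+\mathsf{X}\mathsf{Y})=\det(\mathsf{Id}+\mathsf{Y}\mathsf{X})$ and the intertwining $\mathsf{B}_p\,\mathrm{adj}(\mathsf{Id}+\mathsf{X}\mathsf{B}_p)=\mathrm{adj}(\mathsf{Id}+\mathsf{B}_p\mathsf{X})\,\mathsf{B}_p$ preserve the index set, whereas your two adjugates have different sizes ($\ell(p)$ versus $|\mathcal{N}(p)|$), and adjugates, unlike determinants, do not transport across the $\mathsf{X}\mathsf{Y}\leftrightarrow\mathsf{Y}\mathsf{X}$ swap for rectangular factors. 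The workable route is to run your rank-one argument one level up: write $\mathsf{A}_{G\setminus p}=\mathsf{A}_G-\mathsf{E}_p$ with $\mathsf{E}_p$ the matrix of edges deleted together with $V(p)$, supported on $\mathcal{N}(p)\times\mathcal{N}(p)$ (note this deleted-edge graph, not all of $E_{\mathcal{N}(p)}$, is what is actually drawn in Fig.~\ref{SAP}); then the ratio equals $\det\bigl(\mathsf{Id}+z\,\mathsf{R}(z)|_{\mathcal{N}(p)}\mathsf{E}_p\bigr)$, and substituting $\mathsf{R}(z)|_{\mathcal{N}(p)}=t(z)\mathbf{1}\mathbf{1}^{T}+\mathsf{C}_p+o(1)$ together with $\det(\mathsf{M}+t\,\mathbf{1}v^{T})=\det\mathsf{M}+t\,v^{T}\mathrm{adj}(\mathsf{M})\mathbf{1}$, where $v^{T}=\mathbf{1}^{T}\mathsf{E}_p$ is the degree vector, produces the prefactor $1/\Lambda$, the vector $\mathrm{deg}_p$ and $\mathrm{adj}(\mathsf{Id}+\frac{1}{\Lambda}\mathsf{C}_p\mathsf{B}_p)$ in one stroke. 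Second, the analytic steps you flag yourself are the real content on $\mathbb{Z}^{2}$: the entrywise convergence of $\mathsf{R}(z)_{u,v}-\mathsf{R}(z)_{O,O}$ (the potential-kernel statement behind Eq.~(\ref{Cint})), and above all the passage from $\lim_{z\to1/\Lambda^{-}}A_p(z)/T(z)$ to the limit of coefficient ratios, which is Tauberian rather than Abelian --- the easy implication goes the wrong way, so one needs regularity of the coefficients of $A_p$ or direct singularity analysis. As written, the proposal is a correct skeleton with the right intermediate identities, but not yet a proof.
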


\begin{remark}In spite of $G$ being infinite, Eq.~(\ref{Fpres}) yields a well defined number as both the $\mathsf{C}_p$ and $\mathsf{B}_p$ matrices are of finite size.
\end{remark}

\begin{remark}\label{Fphard}
By Theorem~\ref{InfiniteSieve}, each $F_p$ value is accessible only if both the matrix $\mathsf{C}$ and the self-avoiding polygon $p$ are known. In particular, we emphasize that since $\mathsf{B}_p$ is required to evaluate $F_p$, the self-avoiding polygon $p$ must be constructed explicitly. This represents far more information per polygon than had been made available even by the best enumerating techniques \cite{Conway_1993, Jensen_1999}.
\end{remark}

\begin{figure}
    \centering
    \begin{tikzpicture}
        \foreach \x in {-8,...,-4} {
            \draw (\x,0) -- (\x,4);
        }
        \foreach \y in {0,...,4} {
            \draw (-8,\y) -- (-4,\y);
        }
        \draw[color=red, line width=2pt] (-6,1)--(-5,1);
        \draw[color=red, line width=2pt] (-5,2)--(-5,1);
        \draw[color=red, line width=2pt] (-5,3)--(-5,2);       
        \draw[color=red, line width=2pt] (-5,3)--(-6,3);
        \draw[color=red, line width=2pt] (-7,3)--(-6,3);
        \draw[color=red, line width=2pt] (-7,3)--(-7,2);
        \draw[color=red, line width=2pt] (-6,2)--(-7,2);
        \draw[color=red, line width=2pt] (-6,2)--(-6,1);
        \draw[->, decorate, decoration={snake, amplitude=5pt, segment length=10pt, post length=5pt}, line width=0.75pt] (-3,2) -- (-1,2);
        \node[draw, circle, fill=black, inner sep=2pt, label={[font=\small]above left:$19$}] (A) at (0,2) {};
        \node[draw, circle, fill=black, inner sep=2pt, label={[font=\small]above left:$18$}] (B) at (0,3) {};
        \node[draw, circle, fill=black, inner sep=2pt, label={[font=\small]above left:$9$}] (C) at (1,1) {};
        \node[draw, circle, fill=red, inner sep=2pt, label={[font=\small]above left:\textcolor{red}{$7$}}] (D) at (1,2) {};
        \node[draw, circle, fill=red, inner sep=2pt, label={[font=\small]above left:\textcolor{red}{$6$}}] (E) at (1,3) {};
        \node[draw, circle, fill=black, inner sep=2pt, label={[font=\small]above left:$17$}] (F) at (1,4) {};
        \node[draw, circle, fill=black, inner sep=2pt, label={[font=\small]above left:$10$}] (G) at (2,0) {};
        \node[draw, circle, fill=red, inner sep=2pt, label={[font=\small]above left:\textcolor{red}{$1$}}] (H) at (2,1) {};
        \node[draw, circle, fill=red, inner sep=2pt, label={[font=\small]above left:\textcolor{red}{$8$}}] (I) at (2,2) {};
        \node[draw, circle, fill=red, inner sep=2pt, label={[font=\small]above left:\textcolor{red}{$5$}}] (J) at (2,3) {};
        \node[draw, circle, fill=black, inner sep=2pt, label={[font=\small]above left:$16$}] (K) at (2,4) {};
        \node[draw, circle, fill=black, inner sep=2pt, label={[font=\small]above left:$12$}] (L) at (3,0) {};
        \node[draw, circle, fill=red, inner sep=2pt, label={[font=\small]above left:\textcolor{red}{$2$}}] (M) at (3,1) {};
        \node[draw, circle, fill=red, inner sep=2pt, label={[font=\small]above left:\textcolor{red}{$3$}}] (N) at (3,2) {};
        \node[draw, circle, fill=red, inner sep=2pt, label={[font=\small]above left:\textcolor{red}{$4$}}] (O) at (3,3) {};
        \node[draw, circle, fill=black, inner sep=2pt, label={[font=\small]above left:$15$}] (P) at (3,4) {};
        \node[draw, circle, fill=black, inner sep=2pt, label={[font=\small]above left:$11$}] (Q) at (4,1) {};
        \node[draw, circle, fill=black, inner sep=2pt, label={[font=\small]above left:$13$}] (R) at (4,2) {};
        \node[draw, circle, fill=black, inner sep=2pt, label={[font=\small]above left:$14$}] (S) at (4,3) {};
        \draw (A)--(D);
        \draw (B)--(E);
        \draw (C)--(D);\draw (C)--(H);
        \draw[color=red, line width=2pt] (D)--(E);\draw[color=red, line width=2pt] (D)--(I);
        \draw (E)--(F);\draw[color=red, line width=2pt] (E)--(J);
        \draw (G)--(H);
        \draw[color=red, line width=2pt] (H)--(M);\draw[color=red, line width=2pt] (H)--(I);
        \draw (I)--(J);\draw (I)--(N);
        \draw (J)--(K);\draw[color=red, line width=2pt] (J)--(O);
        \draw (L)--(M);
        \draw[color=red, line width=2pt] (M)--(N);\draw (M)--(L);\draw (M)--(Q);
        \draw[color=red, line width=2pt] (N)--(O);\draw (N)--(R);
        \draw (O)--(P);\draw (O)--(S);
    \end{tikzpicture}
    \caption{A self-avoiding polygon on the square lattice and its distance-one neighbors. The matrix $\mathsf{B}_{\protect\cornerSAP}$ is the adjacency matrix of the graph on the right hand side.}
    \label{SAP}
\end{figure}
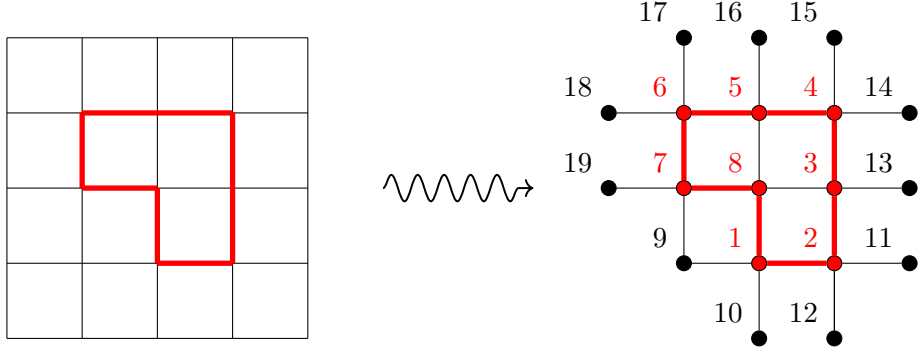
As an illustrative example of the Theorem, consider the corner-shape self-avoiding polygon $\cornerSAP$ shown in Figure~\ref{SAP}. We have $|\mathcal{N}(\cornerSAP)|=19$ so the sieving result providing the exact fraction $F_{\cornerSAP}$ involves the adjugate of a $19\times 19$ matrix. By construction $\mathsf{B}_{\cornerSAP}$ and $\mathsf{C}_{\cornerSAP}$ are symmetric and $\mathsf{C}_{\cornerSAP}$ further presents the same symmetries as the underlying square lattice $G$. This reduces the number of distinct entries of $\mathsf{C}_{p}$ that need to be computed. In the concrete case of $\cornerSAP$, labeling the vertices of $G_{\mathcal{N}(\cornerSAP)}$ as in Fig.~\ref{SAP} (right), the matrix $\mathsf{C}_{\cornerSAP}$ is given by
$$\mathsf{C}_{\cornerSAP}=\begin{pmatrix}
    \mathsf{C}_1&\mathsf{C}_2&\mathsf{C}_3\\ \mathsf{C}_2& \mathsf{C}_4&\mathsf{C}_5 \\\mathsf{C}_3& \mathsf{C}_5&\mathsf{C}_6
\end{pmatrix},$$ 
where the $\mathsf{C}_j$ matrices are 
\begin{itemize}
    \item $\mathsf{C}_1$ is the $9\times 9$ matrix given by
        \begin{equation*}
        \mathsf{C}_1=\begin{pmatrix}0 & -1 & -\frac{4}{\pi } & 1-\frac{8}{\pi } & \frac{8}{\pi }-4 & 1-\frac{8}{\pi } & -\frac{4}{\pi } & -1 & -1 \\ 
            -1 & 0 & -1 & \frac{8}{\pi }-4 & 1-\frac{8}{\pi } & -\frac{16}{3 \pi } & 1-\frac{8}{\pi } & -\frac{4}{\pi } & \frac{8}{\pi }-4 \\
            -\frac{4}{\pi } & -1 & 0 & -1 & -\frac{4}{\pi } & 1-\frac{8}{\pi } & \frac{8}{\pi }-4 & -1 & 1-\frac{8}{\pi}\\
            1-\frac{8}{\pi } & \frac{8}{\pi }-4 & -1 & 0 & -1 & \frac{8}{\pi }-4 & 1-\frac{8}{\pi } & -\frac{4}{\pi } & -\frac{16}{3 \pi }\\ 
            \frac{8}{\pi }-4 & 1-\frac{8}{\pi } & -\frac{4}{\pi } & -1 & 0 & -1 & -\frac{4}{\pi } & -1 & 1-\frac{8}{\pi } \\
             1-\frac{8}{\pi } & -\frac{16}{3 \pi } & 1-\frac{8}{\pi } & \frac{8}{\pi }-4 & -1 & 0 & -1 & -\frac{4}{\pi } & \frac{8}{\pi }-4 \\
             -\frac{4}{\pi } & 1-\frac{8}{\pi } & \frac{8}{\pi }-4 & 1-\frac{8}{\pi } & -\frac{4}{\pi } & -1 & 0 & -1 & -1  \\
             -1 & -\frac{4}{\pi } & -1 & -\frac{4}{\pi } & -1 & -\frac{4}{\pi } & -1 & 0 & -\frac{4}{\pi } \\
             -1 & \frac{8}{\pi }-4 & 1-\frac{8}{\pi } & -\frac{16}{3 \pi } & 1-\frac{8}{\pi } & \frac{8}{\pi }-4 & -1 & -\frac{4}{\pi } & 0 
            \end{pmatrix};
        \end{equation*}
    \item $\mathsf{C}_2$ is the $9\times 1$ vector given by
        \begin{equation*}
            \mathsf{C}_2^T=\begin{pmatrix}-1&-\frac{4}{\pi }&1-\frac{8}{\pi }&8-\frac{92}{3 \pi } &\frac{48}{\pi }-17&8-\frac{92}{3 \pi }&1-\frac{8}{\pi }&\frac{8}{\pi }-4&-\frac{4}{\pi }
                                \end{pmatrix};
        \end{equation*}
    \item $\mathsf{C}_3$ is the $9\times 9$ matrix 
        \begin{equation*}
            \mathsf{C}_3=\begin{pmatrix}
                \frac{8}{\pi }-4 & -\frac{4}{\pi } & 1-\frac{8}{\pi } & -\frac{16}{3 \pi } & 8-\frac{92}{3 \pi } & \frac{48}{\pi }-17 & 8-\frac{92}{3 \pi } & -\frac{16}{3 \pi } & 1-\frac{8}{\pi } \\
                -1 & -1 & -\frac{4}{\pi } & 1-\frac{8}{\pi } & \frac{48}{\pi }-17 & 8-\frac{92}{3 \pi } & -1-\frac{8}{3 \pi } & -1-\frac{8}{3 \pi } & 8-\frac{92}{3 \pi } \\
                -\frac{4}{\pi } & \frac{8}{\pi }-4 & -1 & -\frac{4}{\pi } & \frac{8}{\pi }-4 & 1-\frac{8}{\pi } & -\frac{16}{3 \pi } & 8-\frac{92}{3 \pi } & \frac{48}{\pi }-17 \\
                1-\frac{8}{\pi } & \frac{48}{\pi }-17 & -\frac{4}{\pi } & -1 & -1 & -\frac{4}{\pi } & 1-\frac{8}{\pi } & \frac{48}{\pi }-17 & 8-\frac{92}{3 \pi } \\
                -\frac{16}{3 \pi } & 8-\frac{92}{3 \pi } & 1-\frac{8}{\pi } & \frac{8}{\pi }-4 & -\frac{4}{\pi } & -1 & -\frac{4}{\pi } & \frac{8}{\pi }-4 & 1-\frac{8}{\pi } \\
                -1-\frac{8}{3 \pi } & -1-\frac{8}{3 \pi } & 8-\frac{92}{3 \pi } & \frac{48}{\pi }-17 & 1-\frac{8}{\pi } & -\frac{4}{\pi } & -1 & -1 & -\frac{4}{\pi } \\
                8-\frac{92}{3 \pi } & -\frac{16}{3 \pi } & \frac{48}{\pi }-17 & 8-\frac{92}{3 \pi } & -\frac{16}{3 \pi } & 1-\frac{8}{\pi } & \frac{8}{\pi }-4 & -\frac{4}{\pi } & -1 \\
                1-\frac{8}{\pi } & 1-\frac{8}{\pi } & \frac{8}{\pi }-4 & 1-\frac{8}{\pi } & 1-\frac{8}{\pi } & \frac{8}{\pi }-4 & 1-\frac{8}{\pi } & 1-\frac{8}{\pi } & \frac{8}{\pi }-4 \\
                \frac{48}{\pi }-17 & 1-\frac{8}{\pi } & 8-\frac{92}{3 \pi } & -1-\frac{8}{3 \pi } & -1-\frac{8}{3 \pi } & 8-\frac{92}{3 \pi } & \frac{48}{\pi }-17 & 1-\frac{8}{\pi } & -\frac{4}{\pi } \\
            \end{pmatrix};
        \end{equation*}
        \item $\mathsf{C}_4=0$;
        \item $\mathsf{C}_5$ is the $1\times 9$ vector
             \begin{equation*}
            \mathsf{C}_5=\begin{pmatrix}1-\frac{8}{\pi } & -1 & -\frac{16}{3 \pi } & -1-\frac{8}{3 \pi } & 49-\frac{160}{\pi } & \frac{736}{3 \pi }-80 & 49-\frac{160}{\pi } & -1-\frac{8}{3 \pi } & -\frac{16}{3 \pi }
                                \end{pmatrix};
        \end{equation*}
        \item $\mathsf{C}_6$ is the $9\times 9$ matrix
            \begin{equation*}
            \mathsf{C}_6=\begin{pmatrix}
                0 & -\frac{4}{\pi } & -1 & \frac{8}{\pi }-4 & 8-\frac{92}{3 \pi } & -1-\frac{8}{3 \pi } & -\frac{92}{15 \pi } & \frac{472}{15 \pi }-12 & 49-\frac{160}{\pi} \\
                -\frac{4}{\pi } & 0 & 1-\frac{8}{\pi } & 8-\frac{92}{3 \pi } & \frac{736}{3 \pi }-80 & 49-\frac{160}{\pi } & \frac{472}{15 \pi }-12 & -\frac{92}{15 \pi } & -1-\frac{8}{3 \pi }\\
                -1 & 1-\frac{8}{\pi } & 0 & -1 & 1-\frac{8}{\pi } & -\frac{16}{3 \pi } & -1-\frac{8}{3 \pi } & 49-\frac{160}{\pi } & \frac{736}{3 \pi }-80\\
                \frac{8}{\pi }-4 & 8-\frac{92}{3 \pi } & -1 & 0 & -\frac{4}{\pi } & 1-\frac{8}{\pi } & 8-\frac{92}{3 \pi } & \frac{736}{3 \pi }-80 & 49-\frac{160}{\pi } \\
                8-\frac{92}{3 \pi } & \frac{736}{3 \pi }-80 & 1-\frac{8}{\pi } & -\frac{4}{\pi } & 0 & -1 & \frac{8}{\pi }-4 & 8-\frac{92}{3 \pi } & -1-\frac{8}{3 \pi }\\
                -1-\frac{8}{3 \pi } & 49-\frac{160}{\pi } & -\frac{16}{3 \pi } & 1-\frac{8}{\pi } & -1 & 0 & -1 & 1-\frac{8}{\pi } & -\frac{16}{3 \pi } \\
                 -\frac{92}{15 \pi } & \frac{472}{15 \pi }-12 & -1-\frac{8}{3 \pi } & 8-\frac{92}{3 \pi } & \frac{8}{\pi }-4 & -1 & 0 & -\frac{4}{\pi } & 1-\frac{8}{\pi }\\
                 \frac{472}{15 \pi }-12 & -\frac{92}{15 \pi } & 49-\frac{160}{\pi } & \frac{736}{3 \pi }-80 & 8-\frac{92}{3 \pi } & 1-\frac{8}{\pi } & -\frac{4}{\pi } & 0 & -1 \\
                 49-\frac{160}{\pi } & -1-\frac{8}{3 \pi } & \frac{736}{3 \pi }-80 & 49-\frac{160}{\pi } & -1-\frac{8}{3 \pi } & -\frac{16}{3 \pi } & 1-\frac{8}{\pi } & -1 & 0 \\
            \end{pmatrix}.
            \end{equation*}
\end{itemize}

\noindent Now the fraction $F_{\cornerSAP}$ is given by Eq.~(\ref{Fpres})
as 
\[F_{\cornerSAP}=\frac{1}{576\, \pi ^6} (3 \pi -8)^2 (8-\pi) (4-\pi) (-23\pi^2+120\pi-128\big)\simeq 3.36\times 10^{-4},\]
that is, $0.0336\%$ of all closed walks on the infinite square lattice have $\cornerSAP$ as their last erased loop.

\subsection{Sums of fractions over families of self-avoiding polygons}\label{Sums}
Fix a vertex $O$ on the infinite square lattice and denote $\text{dSAP}_\ell$  the set of self-avoiding polygons of length $\ell$ from $O$ to itself. In this set, self-avoiding polygons that differ only through their orientations or through a translation (i.e. changing the starting point) are considered distinct.
Let $\Sap_\ell$ be the set of self-avoiding polygons of length $\ell$, up to orientation and translation. Let  $\pi(\ell)=|\Sap_\ell|$, then $|\text{dSAP}_\ell|=2\ell\times \pi(\ell)$ since a self-avoiding polygon has 2 orientations and $\ell$ possible starting points.
Let $F(\ell)$ be the sum of $F_p$ values for all $p\in\text{dSAP}_\ell$, that is
$$
F(\ell):=\sum_{p\,\in\,\text{dSAP}_\ell} F_p=2\ell \sum_{p\,\in\,\Sap_\ell} F_p.
$$
Of particular interest are the cumulative sums,
$$
S(\ell):=\sum_{\ell'\leq \ell} F_{\ell'}.
$$
$S(\ell)$ gives the proportions of closed walks on the infinite square lattice whose last erased loop is a self-avoiding polygon of length at most $\ell$. The behavior of $S(\ell)$ as $\ell\to \infty$ on the infinite discrete lattices is a determinant signature of the probability law on self-avoiding polygons resulting from loop-erasing. 
In practice, evaluating the sums $S(\ell)$ is a major computational challenge because they stem from hundreds of billions of individual $F_p$ values, each of which, as noted in Remark~\ref{Fphard}, requires accessing the matrix $\mathsf{C}$ and knowing the self-avoiding polygon $p$ in full details. In other terms, a numerical study of the behavior of $S(\ell)$ as $\ell$ grows requires explicitly constructing all self-avoiding polygons of length $\ell$ as well as an efficient and accurate method to evaluate the lattice Green's function.

\section{Efficient computation of the $\mathsf{C}$ matrix}\label{Cmatrix}

 Because of its definition by Eq.~(\ref{DefC}), the $\mathsf{C}$ matrix shares many properties with the resolvent $\mathsf{R}(z)=(\mathsf{I}-z \mathsf{A}_G)^{-1}$ also known as the lattice Green's function. In particular, this  implies the existence of recurrence relations between entries of $\mathsf{C}$, which we use to calculate $\mathsf{C}$ in a computationally efficient and numerically stable way. 
 
    \subsection{Recurrence relations}
Let $O:=(0,0)$ denote the origin of the square lattice. For every vertex $v$, let the pair $(i_v,j_v)\in\mathbb{Z}^2$ (or $(i,j)$ if there is no possible confusion) denote its coordinates and $\widetilde{v}$ the vertex of coordinates $(j_v,i_v)$ that is, the symmetric of $v$ with respect to the first bisector. 
First, observe that $\mathsf{R}(z)$ satisfies
$\mathsf{R}(z).(z \mathsf{A}_G) = \mathsf{R}(z)-\mathsf{I}
$ and so, by Eq.~\ref{DefC}, 
\begin{align}
\mathsf{C}.\left(\frac{1}{\Lambda}\mathsf{A}_G\right)&=\lim_{z\to1/\Lambda^-}(\mathsf{I}-\mathsf{P}_\Lambda)\mathsf{R}(z).(z\mathsf{A}_G),\label{CRA}\\
&=\lim_{z\to1/\Lambda^-}(\mathsf{I}-\mathsf{P}_\Lambda)\mathsf{R}(z)-(\mathsf{I}-\mathsf{P}_\Lambda),\nonumber\\
&=\mathsf{C} - (\mathsf{I}-\mathsf{P}_\Lambda).\nonumber
\end{align}
Since $G$ is regular, $(\mathsf{P}_\Lambda)_{v,v'}=1/N\to 0$ as $N\to\infty$, then the above yields the following recurrence relation between entries of $\mathsf{C}$,
\begin{equation}\label{RecRelC}
\frac{1}{\Lambda} \sum_{v''\in\mathcal{N}(v')}\mathsf{C}_{v,v''} = \mathsf{C}_{v,v'} - \delta_{v,v'},
\end{equation}
here $\mathcal{N}(v')$ denotes the set of neighbors of vertex $v'$ on $G$, that is the set of vertices at distance 1 of $v'$. 
Multiplying by $\mathsf{A}$ on the left of $\mathsf{C}$ in Eq.~(\ref{CRA}) we obtain the symmetric relation
$$
\frac{1}{\Lambda} \sum_{v''\in\mathcal{N}(v')}\mathsf{C}_{v',v} = \mathsf{C}_{v,v'} - \delta_{v,v'},
$$
confirming that $\mathsf{C}=\mathsf{C}^T$, as expected from  Eq.~(\ref{Cint}). As the lattice $G$ is vertex transitive, we may choose $v$ to be fixed at the origin $O$ and have $v'$ move throughout the lattice to determine $\mathsf{C}$ entirely. Furthermore, $\mathsf{C}_{v,v'}=\mathsf{C}_{v',v}$, $\mathsf{C}_{O,v}=\mathsf{C}_{O,\widetilde{v}}$, $\mathsf{C}_{O,O}=0$ and  all $\mathsf{C}_{O,(i,i)}$ are well known (see below). Therefore, Eq.~(\ref{RecRelC}) is sufficient to determine all entries of $\mathsf{C}$ by solving linear equations and without computing any integral \cite{Cserti2000, Morita1975}. 
In the sequel, the coefficient $\mathsf{C}_{O,v}$ with $v:=(i,j)$ will be denoted by $\coeff{i,j}$, so that we have:
\begin{prop}\label{RecSquare}
On the square lattice we have $\coeff{0,0}=0$, $\coeff{0,1}=-1$,
\begin{subequations}
\begin{align}
\coeff{i,i}&=-\frac4\pi\sum_{k=1}^i \frac1{2k-1}=-\frac4\pi\sum_{k=0}^{i-1} \frac1{2k+1}&\text{for $i\geq 1$}\label{ciiEq}\\
\coeff{0,j}&=4 \coeff{0,j-1}-\coeff{0,j-2}-2\coeff{1,j-1}&\text{for $j\geq 2$} \label{cdt1}\\
\coeff{i,j}&=4\coeff{i,j-1}-\coeff{i,j-2}-\coeff{i-1,j-1}-\coeff{i+1,j-1}&\text{for $j>i\geq 1$} \label{cdt2}
\end{align}
\end{subequations}
\end{prop}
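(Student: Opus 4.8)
The plan is to derive every assertion from the single scalar recurrence~(\ref{RecRelC}) specialised to $v=O$, together with the symmetries of $\mathsf C$ already recorded — $\coeff{i,j}=\coeff{j,i}$ (this is $\mathsf C_{O,v}=\mathsf C_{O,\widetilde v}$) and $\coeff{i,j}=\coeff{|i|,|j|}$ (reflection invariance of the lattice in the coordinate axes) — the only point needing outside input being the diagonal values~(\ref{ciiEq}), for which I will call on the integral representation~(\ref{Cint}). First, the two initial values. That $\coeff{0,0}=0$ is the statement $\mathsf C_{O,O}=0$ recorded above (equivalently, the $x_{ij}=y_{ij}=0$ case of~(\ref{Cint}), whose integrand vanishes identically). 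For $\coeff{0,1}$, evaluate~(\ref{RecRelC}) at $v=v'=O$: the four neighbours of $O$ are $(\pm1,0)$ and $(0,\pm1)$, and by the symmetries the numbers $\coeff{1,0},\coeff{-1,0},\coeff{0,1},\coeff{0,-1}$ are all equal to $\coeff{0,1}$, so $\tfrac14\cdot 4\,\coeff{0,1}=\coeff{0,0}-1$, i.e.\ $\coeff{0,1}=-1$.

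Next, the two bulk recurrences, still with $v=O$. For~(\ref{cdt2}) I would take $v'=(i,j-1)$ with $j>i\ge1$; then $v'\neq O$, so the Kronecker delta in~(\ref{RecRelC}) vanishes, the neighbour set of $v'$ is $\{(i\pm1,j-1),(i,j-2),(i,j)\}$, and clearing the factor $1/\Lambda=1/4$ and isolating $\coeff{i,j}=\mathsf C_{O,(i,j)}$ produces exactly~(\ref{cdt2}). For~(\ref{cdt1}) I would instead take $v'=(0,j-1)$ with $j\ge2$; again $v'\neq O$, the neighbours of $v'$ are $\{(\pm1,j-1),(0,j-2),(0,j)\}$, and the reflection symmetry $\coeff{-1,j-1}=\coeff{1,j-1}$ merges the two lateral terms into $2\coeff{1,j-1}$, which after isolating $\coeff{0,j}$ is~(\ref{cdt1}).

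It remains to prove the diagonal formula~(\ref{ciiEq}); this is the one place where~(\ref{RecRelC}) alone is insufficient — evaluating it at $v'=(i,i)$ only returns the identity $\coeff{i,i+1}+\coeff{i-1,i}=2\coeff{i,i}$ among quantities already tied together by~(\ref{cdt1})--(\ref{cdt2}), and one checks this closes up to a tautology rather than determining $\coeff{i,i}$. I will therefore specialise~(\ref{Cint}) to $x_{ij}=y_{ij}=i$, which makes the first (complex) factor of the integrand equal to $1$ and leaves
\[
\coeff{i,i}=-\frac1\pi\int_{0}^{\infty}\frac1\tau\left(1-\left(\frac{\tau-1}{\tau+1}\right)^{\!2i}\right)d\tau .
\]
With the substitution $s=(\tau-1)/(\tau+1)$, which maps $(0,\infty)$ bijectively onto $(-1,1)$, one has $(\tau+1)^{-2}\,d\tau=\tfrac12\,ds$, and combining this with $1-s^{2}=4\tau/(\tau+1)^{2}$ and the factorisation $1-s^{2i}=(1-s^{2})\sum_{k=0}^{i-1}s^{2k}$ turns $\tfrac1\tau\bigl(1-s^{2i}\bigr)\,d\tau$ into $2\sum_{k=0}^{i-1}s^{2k}\,ds$; integrating term by term over $(-1,1)$ gives $\sum_{k=0}^{i-1}\tfrac{4}{2k+1}$, hence $\coeff{i,i}=-\tfrac4\pi\sum_{k=0}^{i-1}\tfrac1{2k+1}$, which is~(\ref{ciiEq}). (The singularity of the integrand at $\tau=0$ is removable and the integrand is $O(\tau^{-2})$ at infinity, so convergence is not an issue; alternatively, this classical closed form for the diagonal lattice Green's function can simply be quoted from \cite{Morita1975,Cserti2000}.)

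The only step that is not pure bookkeeping with~(\ref{RecRelC}) and the lattice symmetries is this diagonal evaluation, so that is where I expect the real (though modest) difficulty to lie. I would close by checking that the statement indeed determines all of $\mathsf C$: using $\coeff{i,j}=\coeff{j,i}$ one generates the entries column by column from the diagonal values via~(\ref{cdt1})--(\ref{cdt2}), the only subtlety being the first super-diagonal $j=i+1$, where~(\ref{cdt2}) is implicit because $\coeff{i+1,j-1}=\coeff{i+1,i}=\coeff{i,i+1}$, but using this symmetry it rearranges to the explicit formula $\coeff{i,i+1}=2\coeff{i,i}-\coeff{i-1,i}$.
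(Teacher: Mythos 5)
Your proposal is correct and follows essentially the same route as the paper: the initial values and the two recurrences (\ref{cdt1})--(\ref{cdt2}) are read off from Eq.~(\ref{RecRelC}) at $v=O$ combined with the symmetries $\mathsf{C}=\mathsf{C}^T$ and $\mathsf{C}_{O,v}=\mathsf{C}_{O,\widetilde v}$, while the diagonal entries must be supplied externally (the paper simply quotes them as ``well known'' from \cite{Cserti2000, Morita1975}). Your only addition is a self-contained and correct derivation of (\ref{ciiEq}) from the integral (\ref{Cint}) via the substitution $s=(\tau-1)/(\tau+1)$, together with the correct observation (matching the paper's Lemma~\ref{compute_the_cij}) that the superdiagonal case of (\ref{cdt2}) must be unfolded using $\coeff{i+1,i}=\coeff{i,i+1}$.
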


As shown by Eq.~(\ref{ciiEq}), the $\coeff{i,i}$ values are known exactly and are easily computable. Due to the symmetry $\coeff{i,j}=\coeff{j,i}$, we only need establish that it is possible to compute $\coeff{i,j}$ above the first bissector of the plan to obtain all entries of $\mathsf{C}$.
\begin{lemma}\label{compute_the_cij}
    For $0\le i<j$, it is possible to compute $\coeff{i,j}$.
\end{lemma}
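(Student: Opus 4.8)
The plan is a strong induction on the anti-diagonal index $n:=i+j$, exploiting the fact that, although the recurrences of Proposition~\ref{RecSquare} are written column-by-column in $j$, they actually couple only coefficients lying on the \emph{same} anti-diagonal $i+j=n$ together with coefficients on the strictly smaller anti-diagonals $n-1$ and $n-2$. The base cases $n=0$ and $n=1$ are the given values $\coeff{0,0}=0$ and $\coeff{0,1}=-1$. For the inductive step, fix $n\ge 2$ and assume that every $\coeff{i',j'}$ with $i'+j'<n$ is computable; combined with the exact formula~(\ref{ciiEq}) for the diagonal and the symmetry $\coeff{a,b}=\coeff{b,a}$, this makes all coefficients on anti-diagonals strictly below $n$ available.

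Write $x_i:=\coeff{i,n-i}$ for $0\le i\le n/2$; the coefficients we must produce are those with $i<n-i$, namely $x_0,\dots,x_{m-1}$ when $n=2m$ and $x_0,\dots,x_m$ when $n=2m+1$. First we pin down an \emph{anchor} at the large-$i$ end. If $n=2m$, then $x_m=\coeff{m,m}$ is already known exactly by~(\ref{ciiEq}). If $n=2m+1$, then applying~(\ref{cdt2}) with $i=m$, $j=m+1$ and using $\coeff{m+1,m}=\coeff{m,m+1}$ and $\coeff{m,m-1}=\coeff{m-1,m}$ collapses it to $2\coeff{m,m+1}=4\coeff{m,m}-2\coeff{m-1,m}$, whose right-hand side is known; this computes $x_m$.

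Next, for $1\le i\le m-1$, equation~(\ref{cdt2}) applied to $\coeff{i,n-i}$ rearranges to
$$\coeff{i,n-i}+\coeff{i+1,n-i-1}=4\,\coeff{i,n-i-1}-\coeff{i,n-i-2}-\coeff{i-1,n-i-1},$$
and every term on the right lies on anti-diagonal $n-1$ or $n-2$, hence is known; this is a relation $x_i+x_{i+1}=b_i$ with $b_i$ explicit. Likewise~(\ref{cdt1}) with $j=n$ gives $x_0+2x_1=4\,\coeff{0,n-1}-\coeff{0,n-2}=:b_0$, with $b_0$ explicit. The system formed by $x_i+x_{i+1}=b_i$ for $i=1,\dots,m-1$ together with $x_0+2x_1=b_0$, in which $x_m$ is already fixed, is upper bidiagonal and nonsingular, so it is solved in closed form by back-substitution: $x_{m-1}=b_{m-1}-x_m$, then $x_{m-2}=b_{m-2}-x_{m-1}$, \dots, $x_1=b_1-x_2$, and finally $x_0=b_0-2x_1$. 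This produces every $\coeff{i,j}$ with $i+j=n$ and $i<j$, closing the induction.

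The only non-mechanical point — and the reason a naive one-step recursion fails — is the observation that $\coeff{i+1,j-1}$ in~(\ref{cdt2}) (resp.\ $\coeff{1,j-1}$ in~(\ref{cdt1})) sits on the \emph{same} anti-diagonal as the coefficient being computed, which forces the small per-anti-diagonal linear system and makes the exactly known diagonal values (or, for odd $n$, the self-referential instance of~(\ref{cdt2})) indispensable as anchors. What remains is routine: checking the degenerate cases ($n=2$: no relations of the form $x_i+x_{i+1}=b_i$, and $x_0$ comes straight from~(\ref{cdt1}); $n=3$: no back-substitution step) and confirming that all uses of~(\ref{cdt1})–(\ref{cdt2}) stay within their stated ranges ($j\ge 2$, resp.\ $j>i\ge 1$), any $\coeff{a,b}$ with $a>b$ being read via $\coeff{a,b}=\coeff{b,a}$.
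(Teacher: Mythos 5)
Your proof is correct, but it takes a genuinely different route from the paper's. The paper inducts on $j$ and fills the array row by row, exactly as in the traversal of Figure~\ref{Direction_of_computations}: when computing row $j+1$, every term on the right-hand side of \eqref{cdt1}--\eqref{cdt2} lies in the already-completed rows $j$ and $j-1$, so each entry is obtained by a direct one-step evaluation --- with the single exception of the diagonal-adjacent entry $\coeff{j,j+1}$, where the recurrence becomes self-referential and is resolved by the symmetry $\coeff{j+1,j}=\coeff{j,j+1}$, giving $2\coeff{j,j+1}=4\coeff{j,j}-\coeff{j,j-1}-\coeff{j-1,j}$. That is precisely your ``anchor'' for odd $n$, so the two arguments share their only non-mechanical ingredient. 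What differs is the sweep order: by choosing anti-diagonals $i+j=n$, you place $\coeff{i+1,j-1}$ on the same level as $\coeff{i,j}$ and must therefore solve a bidiagonal system $x_i+x_{i+1}=b_i$ by back-substitution from the known diagonal value. This is a valid and self-contained resolution, but note that your closing remark --- that ``a naive one-step recursion fails'' --- is an artefact of your ordering, not an intrinsic obstruction: under the paper's row ordering the recursion \emph{is} a naive one-step recursion away from the diagonal. Your version costs a short linear system per anti-diagonal and a slightly longer case analysis; the paper's version is simpler and matches the implemented algorithm, while yours has the mild advantage of making the dependency structure of \eqref{cdt2} across anti-diagonals explicit.
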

\begin{proof}
    We proceed by induction on $j\ge 1$.
    \begin{itemize}
        \item If $j=1$ then $i=0$ and $\coeff{1,0}=\coeff{0,1}=-1$.
        \item Let $j\ge 1$ and assume that for $0\le i<j$, it is possible to compute $\coeff{i,j}$. We show that for $0\le i<j+1$, it is possible to compute $\coeff{i,j+1}$. There are $3$ possibilities.
        \begin{enumerate}
            \item Either $i=0$ then $\coeff{0,j+1}$ can be computed using Equation~\eqref{cdt1} since $j+1\ge 2$.
            \item Or $1\le i \le j-1$ then $\coeff{i,j+1}$ can be computed using Equation~\eqref{cdt2}.
            \item Or $i=j$ then we use Equation~\eqref{cdt2} and the symmetric relation to obtain $2\coeff{j,j+1}=4\coeff{j,j}-\coeff{j,j-1}-\coeff{j-1,j}$.
        \end{enumerate}
    \end{itemize}
    \end{proof}
\vspace{-5mm}
\noindent Graphically, this iteration implements the following traversal of the square lattice: 
   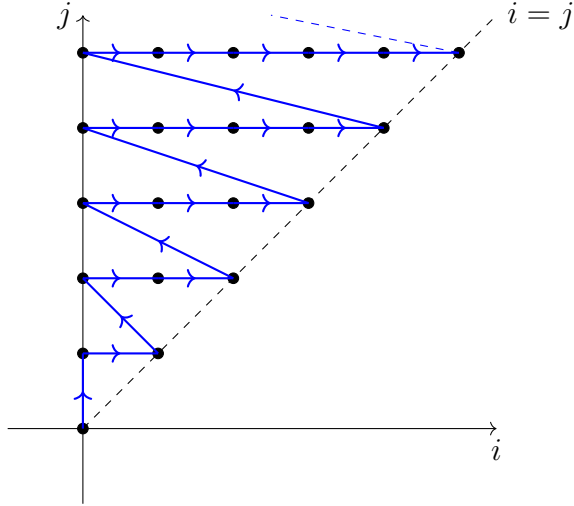
\begin{figure}[htpb]
\begin{center}
\begin{tikzpicture} 
    \draw[->] (-1,0) -- (5.5,0) node[below] {$i$};
    \draw[->] (0,-1) -- (0,5.5) node[left] {$j$};
    \draw[dashed] (0,0) -- (5.5,5.5) node[right] {$i = j$};
    \filldraw (0,0) circle (2pt);
    \foreach \y in {1,...,5}
        \foreach \x in {0,...,\y}
            \filldraw (\x,\y) circle (2pt);
    \draw[dashed, color=blue] (5,5) -- (2.5,5.5);
    \begin{scope}[thick,decoration={
        markings,
        mark=at position 0.5 with {\arrow{>}}}
        ] 
    \draw[postaction={decorate}, color=blue] (0,0) -- (0,1);
    \draw[postaction={decorate}, color=blue] (0,1) -- (1,1);
    \draw[postaction={decorate}, color=blue] (1,1) -- (0,2);
    \draw[postaction={decorate}, color=blue] (0,2) -- (1,2);
    \draw[postaction={decorate}, color=blue] (1,2) -- (2,2);
    \draw[postaction={decorate}, color=blue] (2,2) -- (0,3);
    \draw[postaction={decorate}, color=blue] (0,3) -- (1,3);
    \draw[postaction={decorate}, color=blue] (1,3) -- (2,3);
    \draw[postaction={decorate}, color=blue] (2,3) -- (3,3);
    \draw[postaction={decorate}, color=blue] (3,3) -- (0,4);
    \draw[postaction={decorate}, color=blue] (0,4) -- (1,4);
    \draw[postaction={decorate}, color=blue] (1,4) -- (2,4);
    \draw[postaction={decorate}, color=blue] (2,4) -- (3,4);
    \draw[postaction={decorate}, color=blue] (3,4) -- (4,4);
    \draw[postaction={decorate}, color=blue] (4,4) -- (0,5);
    \draw[postaction={decorate}, color=blue] (0,5) -- (1,5);
    \draw[postaction={decorate}, color=blue] (1,5) -- (2,5);
    \draw[postaction={decorate}, color=blue] (2,5) -- (3,5);
    \draw[postaction={decorate}, color=blue] (3,5) -- (4,5);
    \draw[postaction={decorate}, color=blue] (4,5) -- (5,5);
    \end{scope}
\end{tikzpicture}
\caption{Direction of computations for the calculation of all $c_{i,j}$ on the square lattice.}
\label{Direction_of_computations}
\end{center}
\label{parcours}
\vspace{-7mm}
\end{figure} 

\noindent While $\mathsf{C}$ is an infinite matrix, for any given self-avoiding polygon $p$ we need only compute its finite submatrix $\mathsf{C}_p$ to calculate $F_p$ via Eq.~(\ref{Fpres}). Instead of doing this calculation anew for every $p$, in practice, we compute a finite submatrix  of $\mathsf{C}$ only once over the finite grid that contains all the self-avoiding polygons to be constructed (see \S\ref{SS:board} for details). The required $\mathsf{C}_p$ matrices are then  extracted from this.

\subsection{Implementation}
Firstly remark that two vertices belonging to the distance-one neighborhood $\mathcal{N}(p)$ of a polygon $p$ of length $\ell$, stand at a distance of at most $\ell/2+2$ from each other. Thus, computations of the $F_p$ fractions for all  self-avoiding polygons of length up to $\ell$ only necessitates coefficients $c_{i,j}$ for $0\leq i < j \leq \ell/2 + 2$. 
Secondly, by Proposition~\ref{RecSquare}, the computation of any $c_{i,j}$ involves a recursion with many intermediate steps and, consequently, is prone to an accumulation of rounding errors. This leads to a poor overall approximation of $c_{i,j}$, even when working with \emph{double precision floating point}, i.e. with a precision of $10^{-16}$. 
To bypass this issue, observe that since $c_{i,j}\in \mathbb{Q}[1/\pi]$, there is a unique way to write $c_{i,j}=a_{i,j}+b_{i,j}\times \frac{1}\pi$, where $a_{i,j}$ and $b_{i,j}$ are two rationals numbers. Computing these rationals exactly is possible so no accumulation of rounding errors can occur. Adapting Proposition~\ref{RecSquare}, the rational coefficients are obtained through the following recursion:
\begin{enumerate}
\setlength{\itemsep}{1pt}
\item Set $a_{0,0} =0$, $b_{0,0}=0$, $a_{0,1} = -1$ and $b_{0,0} = 0$;
 \item For $i$ from $1$ to $n$ set $a_{i,i} = 0$ and compute $b_{i,i}= b_{i-1,i-1} - 4/(2i-1)$;
 \begin{enumerate}[itemsep=2pt, topsep = -2pt]
     \item Compute $a_{0,j} = 4\,a_{0,j-1} - a_{0,j-2} - 2\,a_{1,j-1}$ and $b_{0,j} = 4\,b_{0,j-1} - b_{0,j-2} - 2\,b_{1,j-1}$;
     \item For $i$ from $1$ to $j-2$, compute $a_{i,j}=4\,a_{i,j-1}-a_{i,j-2}-a_{i-1,j-1}-a_{i+1,j-1}$ and $b_{i,j}=4\,b_{i,j-1}-b_{i,j-2}-b_{i-1,j-1}-b_{i+1,j-1}$;
     \item Compute $a_{j-2,j} = 2\, a_{j-1,j-1}-a_{j-1,j-1}$ and $b_{j-2,j} = 2\, b_{j-1,j-1}-b_{j-1,j-1}$.
 \end{enumerate}
\end{enumerate}
Once coefficients $a_{i,j}$ and $b_{i,j}$ are so obtained, we get a numerical approximation of $c_{i,j}$:
\[
c_{i,j}\approx \left(a_{i,j}+\dfrac{1725033}{5419351} \times b_{i,j}\right) + b_{i,j} \times 2.27595720048157 \times 10^{-15},
\]
which follows from $\dfrac{1}\pi\approx \dfrac{1725033}{5419351} + 2.27595720048157 \times 10^{-15}$.  

Combining these observations and results, and given a maximum length $\ell$, we proceed as above to compute $a_{i,j}$ and $b_{i,j}$ with $0\leq i < j \leq \ell/2 + 2$ once and for all. In a second stage specific to each polygon $p$, the $\mathsf{C}_p$ matrices are constructed from the stored rational coefficients. Rational numbers are given by irreducible fractions $p/q$ of integers, each of which is coded using $64$ bits and so must belong to $\{-2^{63},\ldots,2^{63}-1\}$. If integers outside of that range are needed, an overflow occurs resulting in a missed computation of $c_{i,j}$.
In practice, we observed that integers coded on $64$ bits are enough when computing coefficients~$a_{i,j}$ and $b_{i,j}$ with $0\leq i < j\leq 12$. This means that we can only deal with self-avoiding polygons of length at most $20$.
For longer self-avoiding polygons, we must instead use integers coded on $128$ bits, allowing for valid computations of $c_{i,j}$ with $0\leq i < j \leq 510$, thanks to which we can reach self-avoiding polygons of length at most~$1016$.

\section{An algorithm for constructing self-avoiding polygons}\label{AlgoSec}
Once $\mathsf{C}$ is obtained, the second difficulty of Theorem~\ref{InfiniteSieve} is that $F_p$ depends on the adjacency matrix $\mathsf{B}_p$ of the distance-one neighborhood of the self-avoiding polygon $p$. In order to compute large number of $F_p$ values, we must therefore construct self-avoiding polygons as efficiently as possible.

Let us fix a point $O$ of the square lattice and a length $\ell\geq 2$.
The set of all finite path on the square lattice starting at $O$ will be denoted $\mathcal{P}$.
To each element of $\mathcal{P}$ corresponds a finite sequence of steps  \texttt{Down}, \texttt{Left}, \texttt{Right} and \texttt{Up}.
More precisely there is a bijection between elements of $\mathcal{P}$ and the set $S^\ast$ of finite words on the alphabet $S=\{\Down,\Left,\Right,\Up\}$, where \Down, \Left, \Right and \Up denote steps \texttt{Down}, \texttt{Left}, \texttt{Right} and \texttt{Up} respectively.
From now on, paths on the square lattice starting at $O$ will be identified with finite words on the alphabet $\mathcal{S}$.
The set $\mathcal{P}$ is then equipped with a tree structure $\mathcal{T}$, inherited from that of~$S^\ast$.
The root of $\mathcal{T}$ is the empty word $\varepsilon$ and the sons of a path $w$ are $w\Down$, $w\Left$, $w\Right$ and $w\Up$.
In order to construct each member of the set $\Sap_\ell$ of all self-avoiding polygons of length $\ell$, we will pick out a subtree $\mathcal{T}_\ell$ of $\mathcal{T}$ whose leaves at depth $\ell$ are in bijection with $\Sap_\ell$. 

\subsection{Self-avoiding polygon to word}

A self-avoiding polygon of length $\ell$ can be seen as a path starting from one of its vertices. Yet, this correspondence is not one-to-one. 
For instance, the $1\times 1$ square can be represented by $8$ words on $\mathcal{S}$ depending on the starting vertex and the chosen orientation, as depicted on Figure~\ref{fig:word_square},

\begin{figure}[h!]
\centering
\begin{tikzpicture}
\filldraw[gray](0,0) circle (0.05);
\draw[->](0.05,0) -- (0.95,0);
\draw[->](1,0.05) -- (1,0.95);
\draw[->](0.95,1) -- (0.05,1);
\draw[->](0,0.95) -- (0,0.05);
\draw(0.5,-0.5) node{$\Right\Up\Left\Down$};
\begin{scope}[shift={(0,-2.5)}]
\filldraw[gray](0,0) circle (0.05);
\draw[<-](0.05,0) -- (0.95,0);
\draw[<-](1,0.05) -- (1,0.95);
\draw[<-](0.95,1) -- (0.05,1);
\draw[<-](0,0.95) -- (0,0.05);
\draw(0.5,-0.5) node{$\Up\Right\Down\Left$};
\end{scope}
\end{tikzpicture}
\hspace{2em}
\begin{tikzpicture}
\filldraw[gray](1,0) circle (0.05);
\draw[->](0.05,0) -- (0.95,0);
\draw[->](1,0.05) -- (1,0.95);
\draw[->](0.95,1) -- (0.05,1);
\draw[->](0,0.95) -- (0,0.05);
\draw(0.5,-0.5) node{$\Up\Left\Down\Right$};
\begin{scope}[shift={(0,-2.5)}]
\filldraw[gray](1,0) circle (0.05);
\draw[<-](0.05,0) -- (0.95,0);
\draw[<-](1,0.05) -- (1,0.95);
\draw[<-](0.95,1) -- (0.05,1);
\draw[<-](0,0.95) -- (0,0.05);
\draw(0.5,-0.5) node{$\Left\Up\Right\Down$};
\end{scope}
\end{tikzpicture}
 \hspace{2em}
\begin{tikzpicture}
\filldraw[gray](1,1) circle (0.05);
\draw[->](0.05,0) -- (0.95,0);
\draw[->](1,0.05) -- (1,0.95);
\draw[->](0.95,1) -- (0.05,1);
\draw[->](0,0.95) -- (0,0.05);
\draw(0.5,-0.5) node{$\Left\Down\Right\Up$};
\begin{scope}[shift={(0,-2.5)}]
\filldraw[gray](1,1) circle (0.05);
\draw[<-](0.05,0) -- (0.95,0);
\draw[<-](1,0.05) -- (1,0.95);
\draw[<-](0.95,1) -- (0.05,1);
\draw[<-](0,0.95) -- (0,0.05);
\draw(0.5,-0.5) node{$\Down\Left\Up\Right$};
\end{scope}
\end{tikzpicture}
 \hspace{2em}
\begin{tikzpicture}
\filldraw[gray](0,1) circle (0.05);
\draw[->](0.05,0) -- (0.95,0);
\draw[->](1,0.05) -- (1,0.95);
\draw[->](0.95,1) -- (0.05,1);
\draw[->](0,0.95) -- (0,0.05);
\draw(0.5,-0.5) node{$\Down\Right\Up\Left$};
\begin{scope}[shift={(0,-2.5)}]
\filldraw[gray](0,1) circle (0.05);
\draw[<-](0.05,0) -- (0.95,0);
\draw[<-](1,0.05) -- (1,0.95);
\draw[<-](0.95,1) -- (0.05,1);
\draw[<-](0,0.95) -- (0,0.05);
\draw(0.5,-0.5) node{$\Right\Down\Left\Up$};
\end{scope}
\end{tikzpicture}
\caption{The $1\times 1$  square can be represented by eight different words depending on the chosen starting point (in gray).}
\label{fig:word_square}
\end{figure}

\noindent To avoid multiple constructions of the same self-avoiding polygon, we must fix the starting point and the orientation of the path.
Let $p$ be a self-avoiding polygon. 
The \emph{base line} of $p$ is the horizontal line containing the furthest south edge of $p$. 
The \emph{base point} of $p$ is then the furthest west vertex of $p$ on its base line, see Figure~\ref{fig:base_point1}.
For the rest of this part, self-avoiding polygons will be drawn on the square lattice in such a way that their base point will be a fixed point $O$ of the square lattice.

\begin{figure}[h!]
\centering
\begin{tikzpicture}[x=16pt,y=16pt]
\foreach \x in {-8,...,-4} {
\draw[dashed,gray] (\x,0) -- (\x,4);
}
\foreach \y in {0,...,4} {
\draw[dashed,gray] (-8,\y) -- (-4,\y);
}
\draw[gray,line width=1pt](-8,1) -- (-4,1);

\draw[line width=2pt] (-6,1)--(-5,1);
\draw[line width=2pt] (-5,2)--(-5,1);
\draw[line width=2pt] (-5,3)--(-5,2);       
\draw[line width=2pt] (-5,3)--(-6,3);
\draw[line width=2pt] (-7,3)--(-6,3);
\draw[line width=2pt] (-7,3)--(-7,2);
\draw[line width=2pt] (-6,2)--(-7,2);
\draw[line width=2pt] (-6,2)--(-6,1);
\draw[->](-5.95,0.7) -- (-5.05,0.7);
\filldraw[gray] (-6,1) circle (0.15);
\draw(-6.4,0.6) node{\small$O$};
\end{tikzpicture}
\caption{Base line and base point $O$, of self-avoiding polygon. The arrow denotes chosen path orientation.}
\label{fig:base_point1}
\end{figure}
\noindent At this point a self-avoiding polygon can be represented by two paths of $\mathcal{P}$, depending of the chosen orientation. By construction of the base point these words are of the type $\Right\ldots\Down$ and $\Up\ldots\Left$. 
Hence, forcing the first letter to be a right $\Right$, we fix the polygon orientation. This leads to:
\begin{prop}
Each self-avoiding polygon $p$ of length $\ell\geq 2$ with base point at $O$, is described by a unique word $w(p) = \Right\ldots \Down$ of length $\ell$ on the letters $\mathcal{S}$. 
\end{prop}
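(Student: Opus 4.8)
The plan is to pin down, purely geometrically, the two edges of $p$ incident to its base point, and then to invoke the fact that a self-avoiding polygon, regarded as a subgraph of the lattice, is a graph-theoretic cycle and hence admits exactly two descriptions as a closed walk from a prescribed vertex.

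First I would make the base point explicit. Let $y_{\min}$ be the minimum of the $y$-coordinates of the vertices visited by $p$, and let $O$ be the west-most among the vertices of $p$ of $y$-coordinate $y_{\min}$. Since $p$ is a self-avoiding closed walk, as a subgraph of the square lattice it is a cycle, so every vertex it visits --- in particular $O$ --- has degree exactly $2$ in $p$. The edge of $p$ leaving $O$ downward would reach a vertex of $p$ of $y$-coordinate $y_{\min}-1$, which is impossible; the edge of $p$ leaving $O$ to the west would reach a vertex of $p$ on the line $y=y_{\min}$ strictly west of $O$, contradicting the choice of $O$. Hence the two edges of $p$ at $O$ are exactly the edge going up and the edge going right. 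Consequently $p$ has a horizontal edge on the line $y=y_{\min}$ (the one leaving $O$ eastward) and no edge of $p$ lies below that line, so $y=y_{\min}$ is precisely the \emph{base line} of $p$ and $O$ is precisely its \emph{base point} in the sense of the definitions above; this justifies conflating this $O$ with the fixed origin.

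Next I would use the two orientations. A cycle of length $\ell$ admits exactly two descriptions as a closed walk starting and ending at a fixed vertex, one per cyclic orientation; each uses every one of the $\ell$ edges once, hence corresponds under the bijection $\mathcal{P}\leftrightarrow S^\ast$ to a word of length $\ell$ on $\mathcal{S}$, and conversely such a word recovers $p$ as the closed path it traces from $O$. The first step of either traversal is along one of the two edges of $p$ at $O$, which by the previous paragraph point up and right; and in a cycle the first and last edges of a traversal are precisely the two edges at the starting vertex. Thus one traversal reads $\Up\ldots$ and, returning to $O$ from the east along the remaining (right) edge, ends with $\Left$; the other reads $\Right\ldots$ and, returning to $O$ from the north along the remaining (up) edge, ends with $\Down$. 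So the two words describing $p$ with base point $O$ are exactly one of the form $\Up\ldots\Left$ and one of the form $\Right\ldots\Down$, both of length $\ell$. Forcing the first letter to be $\Right$ therefore singles out exactly one of them, which is the claimed $w(p)=\Right\ldots\Down$, of length $\ell$ and uniquely encoding $p$.

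The only genuinely delicate step is the local analysis at the base point: namely verifying that the combinatorially defined base line and base point coincide with the lowest-then-west-most vertex, and that this vertex necessarily has its two incident edges pointing up and to the right. Everything after that is the elementary observation that a cycle has exactly two orientations, together with reading off the first and last letters. A minor accompanying remark, needed only for cleanliness, is that on the (bipartite) square lattice there is no self-avoiding polygon of length $2$ or $3$, so the hypothesis $\ell\ge2$ covers the vacuous cases and the genuine ones ($\ell\ge4$ even) uniformly.
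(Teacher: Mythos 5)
Your argument is correct and is essentially the paper's own (the paper merely asserts ``by construction of the base point these words are of the type $\Right\ldots\Down$ and $\Up\ldots\Left$''; you supply the missing local analysis showing the two edges at the lowest-then-westmost vertex must point up and right, which is exactly the content being taken for granted). One caveat on your closing remark: in this paper's convention a doubled edge counts as a self-avoiding polygon of length $2$ (see $\pi(2)=1$ in Table~\ref{FellTable}), so the $\ell=2$ case is not vacuous but degenerate --- its word is $\Right\Left$, and both your degree-two argument and the literal ``ends in $\Down$'' format apply only to genuine cycles of length $\ell\ge 4$.
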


\noindent For example, the word corresponding to the polygon shown in Figure~\ref{fig:base_point1} is $\Right\Up\Up\Left\Left\Down\Right\Down$.

\subsection{The tree $\mathcal{T}_\ell$ of $\ell$-admissible words}

We obtain the tree $\mathcal{T}_\ell$ from $\mathcal{T}$ by cutting the latter. More precisely, a node of $\mathcal{T}$ is retained in $\mathcal{T}_\ell$ if and only if it is \emph{$\ell$-admissible}.

\begin{defi}
A word $w$ on the alphabet $\mathcal{S}$ is \emph{$\ell$-admissible} if the following properties are satisfied:

-- $(P1)$ $w$ is of length at most $\ell$;

-- $(P2)$ whenever $w$ is not empty, its first letter is $\Right$;

-- $(P3)$ $w$ is the empty word or its father is itself $\ell$-admissible;

-- $(P4)$ $w$ admits $O$ as base point;

-- $(P5)$ whenever $w$ has length $\ell$, it correspond to a closed path

-- $(P6)$ $w$ has no self intersection;

-- $(P7)$ $w$ has a chance to be completed as a self-avoiding closed path of length $\ell$.

\end{defi}

\noindent Properties $(P1)$ and $(P2$) are immediate to check. 
As we will construct the tree $\mathcal{T}_\ell$ by induction from the empty word, property $(P3)$ causes no particular difficulties.
For property $(P4)$, we have to define the base point of a word $w$, corresponding of a path $p$ starting at $O$.
As explained in the preceding section, the base line of a polygon $p$ is the furthest south horizontal line containing an edge of $p$ and the base point of $w$ is then the furthest west vertex of $p$ on its base line.

\begin{figure}[h!]
\centering
\begin{tikzpicture}[x=15pt,y=15pt]
\foreach \x in {-7,...,-4} {
\draw[dashed,gray] (\x,0) -- (\x,3);
}
\foreach \y in {0,...,3} {
\draw[dashed,gray] (-7,\y) -- (-4,\y);
}
\draw[gray,line width=1pt](-7,1) -- (-4,1);
\draw[line width=2pt] (-6,1)--(-6,2);
\draw[line width=2pt] (-6,2)--(-5,2);
\filldraw[gray] (-6,1) circle (0.15);
\draw(-6.4,0.6) node{\small$O$};
\draw(-5.5,-0.5) node{\phantom{b)}};
\draw(-5.5,-0.5) node{\textbf{a)}};
\end{tikzpicture}
\hspace{1em}
\begin{tikzpicture}[x=15pt,y=15pt]
\foreach \x in {-7,...,-4} {
\draw[dashed,gray] (\x,-1) -- (\x,2);
}
\foreach \y in {-1,...,2} {
\draw[dashed,gray] (-7,\y) -- (-4,\y);
}
\draw[gray,line width=1pt](-7,0) -- (-4,0);
\draw[line width=2pt] (-6,1)--(-5,1);
\draw[line width=2pt] (-5,1)--(-5,0);
\filldraw[black] (-6,1) circle (0.15);
\filldraw[gray] (-5,0) circle (0.15);
\draw(-6.4,0.6) node{\small$O$};
\draw(-5.5,-1.5) node{\textbf{b)}};
\end{tikzpicture}
\hspace{1em}
\begin{tikzpicture}[x=15pt,y=15pt]
\foreach \x in {-8,...,-4} {
\draw[dashed,gray] (\x,0) -- (\x,3);
}
\foreach \y in {0,...,3} {
\draw[dashed,gray] (-8,\y) -- (-4,\y);
}
\draw[gray,line width=1pt](-8,1) -- (-4,1);
\draw[line width=2pt] (-6,1)--(-5,1);
\draw[line width=2pt] (-5,2)--(-5,1);
\draw[line width=2pt] (-5,2)--(-7,2);
\draw[line width=2pt] (-7,2)--(-7,1);
\filldraw[black] (-6,1) circle (0.15);
\filldraw[gray] (-7,1) circle (0.15);
\draw(-6.4,0.6) node{\small$O$};
\draw(-6,-0.5) node{\phantom{b)}};
\draw(-6,-0.5) node{\textbf{c)}};
\end{tikzpicture}
\caption{Three words that are not admissible. In \textbf{a)}, property $(P2)$ is unsatisfied as the first letter is $\Up$. In \textbf{b)} and \textbf{c)} this is property $(P4)$. Base points of drawn paths are in grey.}
\label{fig:base_point}
\end{figure}

\noindent Since we aim at constructing hundred of billions of self-avoiding polygons, we need a process to check properties $(P4)$ to $(P7)$ as efficiently as possible. 

\subsection{The game board}
\label{SS:board}

We construct our $\ell$-admissible paths on a \emph{game board}, which can be seen as a finite part of the dual lattice of the square lattice with  extra decorations on each cell. The cell containing the base point $O$ is the termed the \emph{base cell} and denoted $\texttt{O}$. We first determine the optimal size of our game board. 
Denote by $E_\ell, N_\ell$, and $W_\ell$ the $\ell$-accepting words that goes furthest east, north and west from the base point $O$, respectively. We verify immediately that these words are of the form 
\begin{align*}
E_\ell & = \underbrace{\Right\cdots\Right}_{a_\ell}\Up\underbrace{\Left\ldots\Left}_{a_\ell}\Down & \text{with $a_\ell=\ell/2-1$,}\\
N_\ell & = \Right\underbrace{\Up\cdots\Up}_{a_\ell}\Left\underbrace{\Down\ldots\Down}_{a_\ell}\\
W_\ell & = \Right\Up\Up\Left\underbrace{\Left\ldots\Left}_{b_\ell}\Down\underbrace{\Right\ldots\Right}_{b_\ell}\Down&\text{with $b_\ell=\ell/2-3$},
\end{align*}
where we consider only even lengths $\ell$, since there are no polygon of odd length on the infinite square lattice.
It follows that the game board must have at least $a_\ell$ cells east of the base cell,  $a_\ell$ cells  north of it and $b_\ell-1$ cells west of the cell that is immediately north of the base cell.
It will be useful to make a \emph{border} of forbidden cells to our game board. By definition of base point, any cells west of it are forbidden.
Thus the game board has width $w_\ell = a_\ell + b_\ell + 3 = \ell - 1$ and height $h_\ell = a_\ell + 3 = \ell/2 +2$, see the illustration of  Figure~\ref{fig:game_board}.
\begin{figure}[h!]
\centering
\begin{tikzpicture}[x=15pt,y=15pt]
\filldraw[color=gray] (-8.5,-0.5) rectangle (-1.5,0.5);
\filldraw[color=gray] (-8.5,-0.5) rectangle (-7.5,5.5);
\filldraw[color=gray] (-7.5,5.5) rectangle (-1.5,4.5);
\filldraw[color=gray] (-2.5,4.5) rectangle (-1.5,0.5);
\filldraw[color=gray] (-7.5,0.5) rectangle (-6.5,1.5);
\foreach \x in {-8,...,-1} {
\draw(\x-0.5,-0.5) -- (\x-0.5,5.5);
}
\foreach \y in {0,...,6} {
\draw (-8.5,\y-0.5) -- (-1.5,\y-0.5);
}
\draw[line width=2pt] (-6,1)--(-3,1);
\draw[line width=2pt] (-3,1)--(-3,2);
\draw[line width=2pt] (-3,2)--(-6,2);       
\draw[line width=2pt] (-6,2)--(-6,1);
\filldraw[gray] (-6,1) circle (0.15);
\draw(-6,-1) node{\phantom{b)}};
\draw(-5,-1) node{\textbf{a)}};
\end{tikzpicture}
\hspace{1em}
\begin{tikzpicture}[x=15pt,y=15pt]
\filldraw[color=gray] (-8.5,-0.5) rectangle (-1.5,0.5);
\filldraw[color=gray] (-8.5,-0.5) rectangle (-7.5,5.5);
\filldraw[color=gray] (-7.5,5.5) rectangle (-1.5,4.5);
\filldraw[color=gray] (-2.5,4.5) rectangle (-1.5,0.5);
\filldraw[color=gray] (-7.5,0.5) rectangle (-6.5,1.5);
\foreach \x in {-8,...,-1} {
\draw(\x-0.5,-0.5) -- (\x-0.5,5.5);
}
\foreach \y in {0,...,6} {
\draw (-8.5,\y-0.5) -- (-1.5,\y-0.5);
}
\draw[line width=2pt] (-6,1)--(-5,1);
\draw[line width=2pt] (-5,1)--(-5,4);
\draw[line width=2pt] (-5,4)--(-6,4);       
\draw[line width=2pt] (-6,4)--(-6,1);
\filldraw[gray] (-6,1) circle (0.15);
\draw(-5,-1) node{\textbf{b)}};
\end{tikzpicture}
\hspace{1em}
\begin{tikzpicture}[x=15pt,y=15pt]
\filldraw[color=gray] (-8.5,-0.5) rectangle (-1.5,0.5);
\filldraw[color=gray] (-8.5,-0.5) rectangle (-7.5,5.5);
\filldraw[color=gray] (-7.5,5.5) rectangle (-1.5,4.5);
\filldraw[color=gray] (-2.5,4.5) rectangle (-1.5,0.5);
\filldraw[color=gray] (-7.5,0.5) rectangle (-6.5,1.5);
\foreach \x in {-8,...,-1} {
\draw(\x-0.5,-0.5) -- (\x-0.5,5.5);
}
\foreach \y in {0,...,6} {
\draw (-8.5,\y-0.5) -- (-1.5,\y-0.5);
}
\draw[line width=2pt] (-6,1)--(-5,1);
\draw[line width=2pt] (-5,2)--(-5,1);
\draw[line width=2pt] (-5,3)--(-5,2);       
\draw[line width=2pt] (-5,3)--(-6,3);
\draw[line width=2pt] (-7,3)--(-6,3);
\draw[line width=2pt] (-7,3)--(-7,2);
\draw[line width=2pt] (-6,2)--(-7,2);
\draw[line width=2pt] (-6,2)--(-6,1);
\filldraw[gray] (-6,1) circle (0.15);
\draw(-5,-1) node{\textbf{c)}};
\end{tikzpicture}
\hspace{1em}
\caption{Game board for $\ell=8$. The base cell is depicted with a gray dot. Forbidden cells are in gray. Paths $E_\ell$, $N_\ell$ and $W_\ell$ are drawn on \textbf{a)}, \textbf{b)} and \textbf{c)}
 respectively.}
 \label{fig:game_board}
\end{figure}
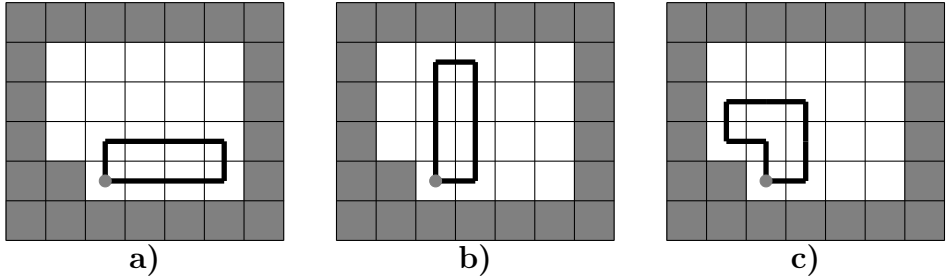

\noindent We identify each cell \texttt{c} of the game board by a unique integer $n_\texttt{c}$ in $\{0,\ldots,h_\ell\times w_\ell-1\}$: we have $n_\texttt{c} = w_\ell\times y + x$ for a cell \texttt{c} at position $(x,y)$ with the convention that the left-bottom cell has position $(0,0)$ and the right-top one has position $(w_\ell-1, h_\ell-1)$.  Consider a non forbidden cell \texttt{c} and denote by $\texttt{c}_D, \texttt{c}_L, \texttt{c}_R$ and $\texttt{c}_U$ the cells visited after a \Down, \Left, \Right and \Up steps from \texttt{c}, respectively. Then,
\[
n_{\texttt{c}_D} = n_\texttt{c} - w_\ell, \quad n_{\texttt{c}_L} = n_\texttt{c} -1, \quad n_{\texttt{c}_R} = n_\texttt{c}+ 1,\quad \text{and} \quad n_{\texttt{c}_U} = n_\texttt{c} + w_\ell.
\]

\subsection{Cell data}
In order to check that a path satisfies property $(P7)$, we add information to the game-board cells as follows.  
Let $w$ be a word of length $t$ on $\mathcal{S}$. 
Starting from the base cell \texttt{O}, we construct a sequence of cells~$\phi_w$ of length~$\ell(w)$ accordingly to $w$ : 
for $i\in\{1,\ldots,\ell(w)\}$ the cell $\kappa_w(i)$ is the adjacent cell of $\kappa_w(i-1)$ with respect to $w_i$.
Let $0\leq t<\ell$ be an integer.  In order to be able to complete $w$ into a closed path of length $\ell$, we must complete it with $\ell-t$ steps linking the cell $\kappa_w(\ell(w))$ to the base cell~\texttt{O}.
This is  only possible if the length of the minimal sequence of steps linking $\kappa_w(\ell(w))$ to \texttt{O} is smaller than~$\ell-t$.
For each non forbidden cell \texttt{c} of the game board, we thus define $d(\texttt{c})$ to be the minimal number of steps needed to reach the base cell from \texttt{c}. This quantity is the Manhattan distance between~\texttt{c} and the base cell~\texttt{O}: if we denote by $(0,0)$ and $(x,y)$ positions of the base cell \texttt{O} and \texttt{c}, respectively, we have $d(\texttt{c}) = |x| + |y|$. 
By convention, we set $d(\texttt{c})=+\infty$ for each forbidden cell (in gray in Figure~\ref{fig:game_board}). 
We can now give a precise formulation of $(P7)$ :

\begin{defi}
A word $w$ of length $t$ on $\mathcal{S}$ satisfies $(P7)$ whenever $t + d(c_w(t)) \leq \ell$ holds.
\end{defi}

\noindent Thus, to each cell \texttt{c} of the game board 
we attach two pieces of information:
\begin{itemize}
    \item the Manhattan distance $d(\texttt{c})$ between \texttt{c} and the base cell \texttt{O};
    \item an integer $t(\texttt{c})$, $1\leq t(\texttt{c})\leq \ell$, that depends on the current word $w$ being constructed.
    For~$i\in\{1,\ldots,\ell(w)\}$, the value of $t(\kappa_w(i))$ is set to $i$. Roughly speaking, $t(\texttt{c})$ specifies \emph{when} the cell $\texttt{c}$ is reached by the word $w$.
\end{itemize}

\subsection{Initialization}
We now describe the main algorithmic engine for the construction of the self-avoiding polygons of a fixed length~$\ell$. 
Recall that this is based on the exploration of the tree $\mathcal{T}_\ell$, whose leaves are in bijection with $\Sap_\ell$.
We begin by constructing the game board \texttt{G} as described above in \S~\ref{SS:board}. We then compute the Manhattan distance $d(\texttt{c})$ from the base cell $\texttt{O}$ to each cell \texttt{c} of \texttt{G}.
In addition of the game board \texttt{G}, we use 
\begin{enumerate}
\item an array \texttt{word} of size $\ell$ storing the word $w$ under construction, \texttt{word[$i$]} being the $i$th letter of $w$;
\item an array \texttt{kappa} of size $\ell$ containing integers in $\{0, \ldots, h_\ell\times w_\ell - 1\}$ or $+\infty$ (in practice we can replace $+\infty$ by any integer greater than $h_\ell\times w_\ell$);
\item a stack \texttt{stack} of triple $(\texttt{c}, k, s)$, where \texttt{c} is a cell, $k$ an integer between $0$ and $\ell$, and $s\in \mathcal{S}$ is a step.
\end{enumerate}

\noindent We give a complete description of the initialization steps in Algorithm~\ref{A:Init}.
\begin{algorithm}[h]
\small
\caption{\small Initialization for the construction of $\Sap_\ell$.}\label{A:Init}
\begin{algorithmic}[1]
\Procedure{InitEnumSap}{}
	\State Construct the game board $\texttt{G}$
    \For{each cell \texttt{c} of $\texttt{G}$}
    \If{\texttt{c} is on the border}
        \State $d(\texttt{c}) \gets +\infty$ 
    \Else
        \State $d(\texttt{c}) \gets$ Manhattan distance between \texttt{c} and \texttt{O}
        \State $t(\texttt{c}) \gets -1$
    \EndIf
    \EndFor
    \State \texttt{word} $\gets$ array steps of length $\ell$
    \State \texttt{kappa} $\gets$ array of cells of length $\ell$
    \State \texttt{stack} $\gets$ empty stack
    \State \texttt{stack.push(\texttt{O}, $0$, $\Right$)} \Comment{Start with a step \Right from the base cell \texttt{O}, to ensure (P2) and (P4)}
\EndProcedure
\end{algorithmic}
\end{algorithm}

\subsection{Exploration of the tree~$\mathcal{T}_\ell$}

We perform a  \emph{depth-first} exploration of tree of $\mathcal{T}_\ell$ using the stack \texttt{stack} which is a container equipped with four operations: 
\begin{itemize}
    \item \texttt{stack.empty()} testing if the stack is empty;
    \item \texttt{stack.push(...)} putting an element at the top of the stack;
    \item \texttt{stack.top()} returning the element at the top of the stack;
    \item \texttt{stack.pop()} removing the element at the top of the stack.
\end{itemize}
Consider for instance the set $\Sap_8$ of length 8 self-avoiding polygons and suppose that at some point of the exploration of $\mathcal{T}_8$ we pop the $8$-admissible word $w = \Right\Up\Right\Up$ from the stack \texttt{stack}.
As a word, $w$ can be completed as $w_1 = w \Down$, $w_2=w \Left$, $w_3 = w\Right$ or $w_4 = w \Up$. 

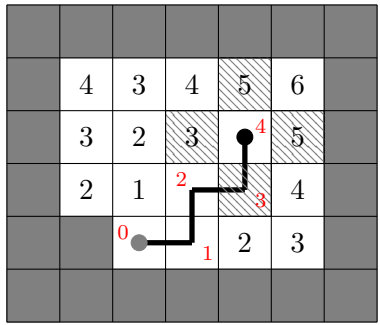
\begin{figure}[H]
\begin{center}
\begin{tikzpicture}[x=20pt,y=20pt]
\filldraw[color=gray] (-8.5,-0.5) rectangle (-1.5,0.5);
\filldraw[color=gray] (-8.5,-0.5) rectangle (-7.5,5.5);
\filldraw[color=gray] (-7.5,5.5) rectangle (-1.5,4.5);
\filldraw[color=gray] (-2.5,4.5) rectangle (-1.5,0.5);
\filldraw[color=gray] (-7.5,0.5) rectangle (-6.5,1.5);
\foreach \x in {-8,...,-1} {
\draw(\x-0.5,-0.5) -- (\x-0.5,5.5);
}
\foreach \y in {0,...,6} {
\draw (-8.5,\y-0.5) -- (-1.5,\y-0.5);
}
\draw[line width=2pt] (-6,1)--(-5,1);
\draw[line width=2pt] (-5,1)--(-5,2);
\draw[line width=2pt] (-5,2)--(-4,2);
\draw[line width=2pt] (-4,2)--(-4,3);
\draw[pattern=north west lines, pattern color=gray]  (-3.5,2.5) rectangle ++ (1,1);
\draw[pattern=north west lines, pattern color=gray]  (-5.5,2.5) rectangle ++ (1,1);
\draw[pattern=north west lines, pattern color=gray]  (-4.5,1.5) rectangle ++ (1,1);
\draw[pattern=north west lines, pattern color=gray]  (-4.5,3.5) rectangle ++ (1,1);

\draw (-7, 2) node {\small $2$};
\draw (-7, 3) node {\small $3$};
\draw (-7, 4) node {\small $4$};
\draw (-6, 2) node {\small $1$};
\draw (-6, 3) node {\small $2$};
\draw (-6, 4) node {\small $3$};
\draw (-5, 3) node {\small $3$};
\draw (-5, 4) node {\small $4$};
\draw (-4, 1) node {\small $2$};
\draw (-4, 4) node {\small $5$};
\draw (-3, 1) node {\small $3$};
\draw (-3, 2) node {\small $4$};
\draw (-3, 3) node {\small $5$};
\draw (-3, 4) node {\small $6$};

\filldraw[gray] (-6,1) circle (0.15);
\filldraw[black] (-4,3) circle (0.15);
\draw[red] (-3.7, 3.2) node {\scriptsize $4$};
\draw[red] (-3.7, 1.8) node {\scriptsize $3$};
\draw[red] (-5.2, 2.2) node {\scriptsize $2$};
\draw[red] (-4.7, 0.8) node {\scriptsize $1$};
\draw[red] (-6.3, 1.2) node {\scriptsize $0$};

\end{tikzpicture}
\end{center}
 \caption{Depth first exploration of the tree $\mathcal{T}_8$. 
 The $8$-admissible word $w = \Right\Up\Right\Up$ of length $4$ is represented by the solid black line on the game board. The base cell is that with the gray bullet, while cell $\kappa_w(4)$ is highlighted with a black bullet. Black numbers in cells $\texttt{c}$ stand for $d(\texttt{c})$, red numbers give $t(\texttt{c})$. Shaded cells are those immediately reachable from $w$ by adding one step. They yield words $w_1=w\Down,\, w_2=w\Left, \, w_3=\Right$ and $\, w_4=w\Up$.}\label{F:ExploreExample}
\end{figure}
\noindent Using the game board as depicted in Figure~\ref{F:ExploreExample}, we check whether these four words are $8$-admissible. Let us denote by $\texttt{c}_1, \texttt{c}_2, \texttt{c}_3$ and $\texttt{c}_4$ the cells reached by $w_1, w_2, w_3$ and $w_4$ respectively, \emph{i.e.}, $\texttt{c}_i = \kappa_{w_i}(5)$. 
In the case of $w_1$, noting that $t(\texttt{c}_1) = 3$ is lower than the length of $w_1$, this word may self-intersect. Remark that $t(\texttt{c}_1) = 3<5$ is not sufficient to immediately conclude that $w_1$ self intersects because, for the sake of speed and efficiency, the game board information is not systematically cleared. As a consequence many cells $\texttt{c'}$ could share the same value  $t(\texttt{c}')=t(\texttt{c}_1)$, the values taken by $t$ being correct only for cells visited by the current word. So, to decide if $w_1$ self intersects the algorithm checks whether cell $\texttt{c}_1$ is indeed the third one visited by $w_1$. This is what $\texttt{kappa}$ is used for: given that $\kappa_{w_1}(3)=\kappa_{w}(3) = \texttt{c}_1$, $\texttt{c}_1$ is really visited twice and $w_1$ is rejected because it violates property $(P6)$.
Words $w_2, w_3$ and $w_4$ are checked similarly and found to satisfy $(P6)$. There remains to verify property $(P7)$ for them. These words are of length $5$ leaving only three steps to reach back to the base cell. 
As the distance information contained in cells $\texttt{c}_3$ and $\texttt{c}_4$ is $5$, which is strictly greater than the three remaining steps, the algorithm rejects words $w_3$ and $w_5$ for violating $(P7)$. Hence $w_2=w\Left$ is the sole $8$-admissible word that can be built from $w$ and $(\texttt{c}_2, 5, \Left)$ is pushed on the stack \texttt{stack}. The exploration of $\mathcal{T}_8$ continues in this fashion until the stack \texttt{stack} is empty.\\

The algorithm for the complete exploration of $\mathcal{T}_\ell$ is presented in Algorithm~\ref{A:Enum}. 
\begin{algorithm}[htpb]
\small
\caption{\small Construction of $\Sap_\ell$.}\label{A:Enum}
\begin{algorithmic}[1]
\Procedure{EnumSap}{}
 \While{\texttt{stack} not empty}
 \State (\texttt{c}, $k$, $s$) $\gets$ \texttt{stack.top()}
 \State \texttt{stack.pop()}
 \State \textsc{Apply}(\texttt{c}, $k$, $s$)
 \State \texttt{c'} $\gets$ the cell adjacent to $\texttt{c}$ with respect to $s$
 \If {$k = \ell$}
 \State \textsc{Treat}(\texttt{word})
 \Else
 \For{$s'$ in $\{\Down, \Left, \Right, \Up \}$}
 \If {\textsc{canAdd}(\texttt{c'}, $k+1$, $s'$)}
 \State \texttt{stack.push}(\texttt{c'}, $k+1$, $s'$)
 \EndIf
 \EndFor
 \EndIf
 \EndWhile
\EndProcedure
\end{algorithmic}
\end{algorithm}
\newpage
\noindent $\triangleright$ In line \texttt{5}, the current word is augmented with step $s$, which corresponds to exploring an edge of $\mathcal{T}_\ell$. The updates performed by the code on that occasion are presented in Algorithm~\ref{A:Apply}.\\
$\triangleright$ In line \texttt{8}, the algorithm has reached a leaf of $\mathcal{T}_\ell$ at depth $\ell$. At this point, \texttt{word} codes for a valid self-avoiding polygon $p$ of length $\ell$. The function \texttt{treat} is called and it implements any desired computations on~$p$, stores it for future use, etc.\\
$\triangleright$ In line \texttt{11}, the algorithm checks whether the current word augmented with  step \texttt{s'} is $\ell$-admissible, a task performed by Algorithm~\ref{A:CanAdd}. If the answer is positive we push this future exploration on the stack.

\begin{algorithm}[h]
\small
\caption{\small Apply a new step to the on construction path.}\label{A:Apply}
\begin{algorithmic}[1]
\Function{Apply}{\texttt{c}, $k$, $s$}
 \State $t(\texttt{c}) \gets$ $k$
 \State \texttt{word[$k$]} $\gets$ $s$
  \State \texttt{kappa[$k$]} $\gets$ \texttt{c}
 \State \texttt{kappa[$k+1$]} $\gets +\infty$
\EndFunction
\end{algorithmic}
\end{algorithm}

\begin{algorithm}[h!]
\small
\caption{\small Test if we can add the letter $s$ at position $k$ in the word under construction}\label{A:CanAdd}
\begin{algorithmic}[1]
\Function{CanAdd}{\texttt{c}, $k$, $s$}
 \State \texttt{c'} $\gets$ the cell adjacent to $\texttt{c}$ with respect to $s$
  \If{$k + d(\texttt{c'}) \geq \ell$} \Comment{Checks $(P7)$ and whether a game board boundary is reached}
  \State \textbf{return} \texttt{false}
  \EndIf
 \If{$t(\texttt{c'}) \leq k $ and $\texttt{kappa[}t(\texttt{c'})\texttt{]} = \texttt{c'}$} \Comment{Checks $(P6)$}
  \State \textbf{return} \texttt{false}
 \EndIf
 \State \textbf{return} \texttt{true}
 \EndFunction
\end{algorithmic}
\end{algorithm}

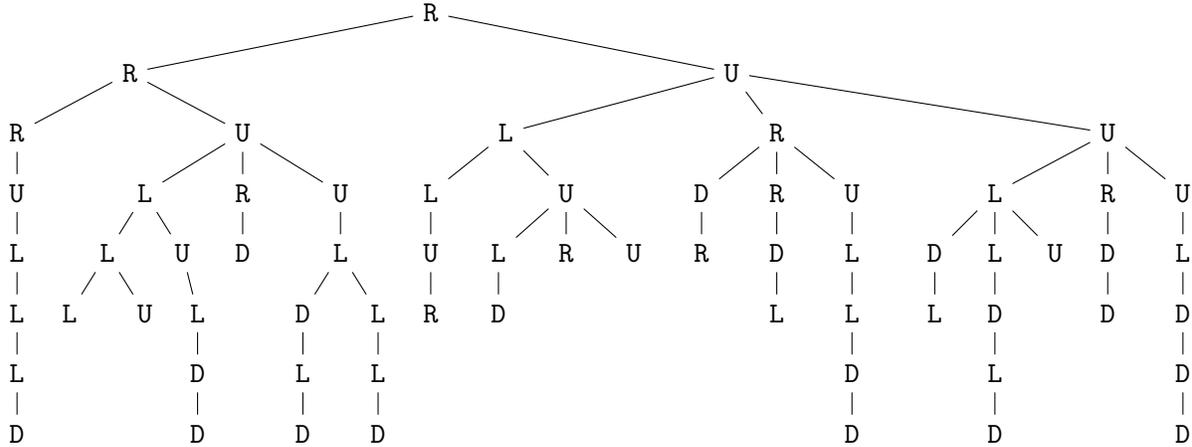
\begin{figure}
\begin{center}
\small
\begin{tikzpicture}[level distance = 8mm,
level 1/.style={sibling distance=80mm},
level 2/.style={sibling distance=50mm},
level 3/.style={sibling distance=10mm}]
  \node {\Right}
    child {node {\Right}
      child [xshift = 10 mm]{node {\Right}
        child {node {\Up}
          child {node {\Left}
            child {node {\Left}
              child {node {\Left}
                child {node {\Down}}
              }
            }
          }
        }
      }
      child[xshift = -10mm]{node {\Up}
        child[xshift = -3mm] {node {\Left}
          child {node {\Left}
            child {node {\Left}}
            child {node {\Up}}
          }
          child {node {\Up}
            child[xshift = 2mm] {node {\Left}
              child {node {\Down}
                child {node {\Down}}
              }
            }
          }
        }
        child {node {\Right}
          child {node {\Down}}
        }
        child[xshift = 3mm] {node {\Up}
          child {node {\Left}
            child {node {\Down} 
              child {node {\Left}
                child {node {\Down}}    
              }
            }
            child {node {\Left}
              child {node {\Left}
                child {node {\Down}}    
              }
            }      
          }
        }
      }
    }
    child {node {\Up}
      child[xshift=20mm]  {node {\Left}
        child[xshift=-5mm] {node {\Left}
           child {node {\Up}
            child {node {\Right}}
          }
        }
        child[xshift=3mm] {node {\Up}
          child[xshift=1mm] {node {\Left}
          child {node {\Down}}
          }
          child {node {\Right}}
          child[xshift=-1mm] {node {\Up}}
        }
      }
      child[xshift=6mm] {node {\Right}
        child {node {\Down}
          child {node {\Right}}
        }
        child {node {\Right}
          child {node {\Down}
            child {node {\Left}}
            }
        }
        child {node {\Up}
          child {node {\Left}
            child {node {\Left}
             child {node {\Down}
               child {node {\Down}}
             }
            }
          }
        }
      }
      child {node {\Up}
        child[xshift=-5mm] {node {\Left}
          child[xshift=2mm] {node {\Down}
           child {node {\Left}}
          }
          child {node {\Left}
            child {node {\Down}
             child {node {\Left}
               child {node {\Down}}
             }
            }
          }
          child[xshift=-2mm] {node {\Up}
          }
        }
        child {node {\Right}
          child {node {\Down}
            child {node {\Down}}
          }
        }
        child {node {\Up}
          child {node {\Left}
            child {node {\Down}
              child {node {\Down}
                child {node {\Down}}
              }
            }
          }
        }
      }
    };
\end{tikzpicture}
\end{center}
\caption{The complete tree $\mathcal{T}_8$. To each node $n$ corresponds the $8$-admissible word made of the letters met going from the root of the tree down to node $n$. The seven elements of $\Sap_8$ are the seven leaves at depth $8$ of $\mathcal{T}_8$.\label{TreeT8}}  
\end{figure}

\section{Computational results}

Our experiments were carried out on the shared computational platform \textsc{Calculco} \cite{Calculco}.
This platform is equipped with 28 nodes for a total of 1852 cores and is designed for distributed computation.
In this section we detail the construction of a suite of programs to take advantage of \textsc{Calculco} to carry out the computation of the sums $S(\ell)$ of \S\ref{Sums} up to $\ell=38$.

\subsection{Memory usage and parallelization}
Our first task is to construct and store the self-avoiding polygons of a given length $\ell$.
The process is fully described in Section~\ref{AlgoSec} so we here focus on the storage method.
As a self-avoiding polygons of length $\ell$ is given by a word which we code  as an array of steps $\{\Down,\Left,\Right,\Up\}$ of length $\ell$.
Each step can be encoded with two bits :
\begin{equation}\label{basicencoding}
\Down \rightarrow \texttt{00}, \quad \Left \rightarrow \texttt{01}, \quad \Right \rightarrow \texttt{10}, \quad \Up \rightarrow \texttt{11}.
\end{equation}

With this methods a self-avoiding polygons of length $\ell$ needs $2 \ell$ bits to be stored. 
Since there are $2\,895\,432\,660$ self-avoiding polygons of length 38, this method requires $5.6\  \texttt{Tb}$ to store all the elements of $\Sap_{38}$. To decrease this enormous quantity of data, remark that thanks to the depth-first exploration of the tree $\mathcal{T}_\ell$, elements of $\Sap_\ell$ are constructed in lexicographic order.
This implies that two consecutive self-avoiding polygons likely share a long prefix. As a consequence, to store two consecutive self-avoiding polygons $p_1$ and $p_2$, we store $p_1$, then store the  length of the common prefix between $p_2$ and $p_1$ as well as the suffix of $p_2$ with respect to this prefix using the previous encoding. Since we consider \Sap of length at most $38\leq 64$, we need no more than $6$ bits to store the length of the prefix, considerably reducing the memory requirement for $p_2$. Finally, we use the Lempel–Ziv–Markov chain algorithm (LZMA) \cite{LZMA} to perform on the fly compression. 
The following table summarize space needed to store $\Sap_\ell$ using basic encoding Eq.~(\ref{basicencoding}); prefix encoding; and LZMA compression on prefix encoding. Empirically, we found that LZMA compression employed directly on basic encoding takes much time and gives terrible results in terms of storage.

\footnotesize{\[
\begin{array}{|c|c|c|c|c|c|c|c|c|c|c|c|}
\hline
\ell & 20 & 22 & 24 & 26 & 28 & 30 & 32 & 34 & 36 & 38 & 40\\
\hline
\text{basic} & 400\texttt{Kb} & 2.4\texttt{Mb} & 4.4\texttt{Mb} & 89\texttt{Mb} & 556\texttt{Mb} & 3.4\texttt{Gb} & 21.6\texttt{Gb} & 137\texttt{Gb} & 877\texttt{Gb}& 5.6 \texttt{Tb}& 35.4 \texttt{Tb} \\
\hline
\text{prefix}&  148\texttt{Kb} & 809\texttt{Kb} & 4.4\texttt{Mb}  & 25.1\texttt{Mb}  & 145\texttt{Mb}& 849\texttt{Mb} & 4.9\texttt{Gb} &- &- &- &- \\
\hline
 \text{+ \text{lzma}}  & 516\texttt{Kb} & 524\texttt{Kb} & 884\texttt{Kb} & 1.8 \texttt{Mb} & 4.4 \texttt{Mb} & 14\texttt{Mb} & 66 \texttt{Mb}& 350\texttt{Mb} & 1.5\texttt{Gb} & 8\texttt{Gb} & 43 \texttt{Gb}\\
\hline
\end{array}
\]}
\normalsize

We can parallelize the exploration of the tree $\mathcal{T}_\ell$ by selecting nodes $w_1, \ldots , w_k$ such that the leaves of maximal depth in $\mathcal{T}_\ell$  are in bijection with leaves of maximal depth in the subtrees of $\mathcal{T}_\ell$ rooted at $w_1, \ldots, w_k$.
The difficulty here is to find a selection of nodes such that the times needed to explore the corresponding rooted subtrees are balanced. 
Using empirical experiments on $\mathcal{T}_{28}$, we were able to parallelize the exploration of $\mathcal{T}_{\ell}$ for $\ell \geq 28$ using $64$ threads. As our ultimate goal is the computation of the fractions $F_p$ for all elements of $\Sap_\ell$, the constructed self-avoiding polygons are stored in many different files, typically several thousands.

\subsection{Computation of $F_p$}\label{FP_pour_p}
At this point, a self-avoiding polygon $p$ is given as a word $w$ of steps. From this word, we draw the polygon back onto the square grid. This yields the graph of the distance one neighborhood $\mathcal{N}(p)$, the adjacency matrix of which is $\mathsf{B}_p$ (see Figure~\ref{SAP}). The matrix $\mathsf{C}_p$ is similarly constructed from the pre-computed coefficients $c_{i,j}$, see \S\ref{Cmatrix}. We then form $\mathsf{M}_p = \mathsf{Id}+\Lambda^{-1} \mathsf{C}_p\cdot \mathsf{B}_p$ and use a Gauss–Jordan elimination to compute the adjugate of $\mathsf{M}_p$.
To speed up computations, we use the FMA instruction set \cite{FMA} to perform fused multiply–add operations on vector of four doubles.
Typically, if \texttt{a}, \texttt{b} and \texttt{a} are arrays of four double floats then the computation of 
\(
\texttt{a}[i] \times \texttt{b}[i] + \texttt{c}[i]
\) for $i=0,1,2,3$,
is done with only one \texttt{CPU} instruction.



\subsection{Numerical results}
All the algorithms presented here were implemented in \texttt{C++}. Source codes are available on \texttt{GitHub} \cite{GitHub}. We present the final numerical results in Table.~\ref{FellTable}, showing the computed values for $F_\ell$ and their accumulation $S(\ell)$. These results are also plotted in Fig.~\ref{FpSum}. These observations lead us to conjecture that $S(\ell) = 1-\ell^{-3/5}+O(\ell^{-1})$ and, from there, $F_\ell=\ell^{-8/5}+O(\ell^{-2})$.
\begin{table}
\begin{center}
\begin{tabular}{|c|c|c|c|c|}
\hline
$\ell$ & $\pi(\ell)$ & $F_\ell$ & $S(\ell)$ \\
\hline
 2 &              1 & 0.50000000000000&0.50000000000000\\
 4 &              1 & 0.14727245910375&0.64727245910375\\
 6 &              2 & 0.06204664274521&0.70931910184896\\
 8 &              7 & 0.04001566383131&0.74933476568027\\
10 &             28 & 0.02805060444094&0.77738537012121\\
12 &            124 & 0.02102490313204&0.79841027325325\\
14 &            588 & 0.01644695527417&0.8148572285274199\\
16 &          2 938 & 0.01329675992709&0.8281539884545099\\
18 &         15 268 & 0.01102242742254&0.8391764158770499\\
20 &         81 826 & 0.00931937541569&0.8484957912927399\\
22 &        449 572 & 0.00800628886867&0.8565020801614099\\
24 &      2 521 270 & 0.00696952442824&0.8634716045896499\\
26 &     14 385 376 & 0.00613458465224&0.8696061892418899\\
28 &     83 290 424 & 0.00545085547657&0.8750570447184599\\
30 &    488 384 528 & 0.00488288725692&0.8799399319753799\\
32 &  2 895 432 660 & 0.00440520148051&0.8843451334558899\\
34 & 17 332 874 364 & 0.00399907262190&0.88834420607779\\
36 &104 653 427 012 & 0.00365046770131&0.8919946737790999\\
38 &636 737 003 384 & 0.00334868856902&0.89534336234812\\
\hline
\end{tabular}
\caption{Number $\pi(\ell)$ of self-avoiding polygons of length $\ell$ constructed by the code (see also \cite[Table C.3]{madras2012self}), corresponding $F_\ell=\sum_{p\in\Sap_\ell}F_p$ values and cumulative sums $S(\ell)$ of the $F_\ell$. \label{FellTable}}
\end{center}
\end{table}

\begin{figure}[t!]
     \centering
\includegraphics[width=.468\textwidth]{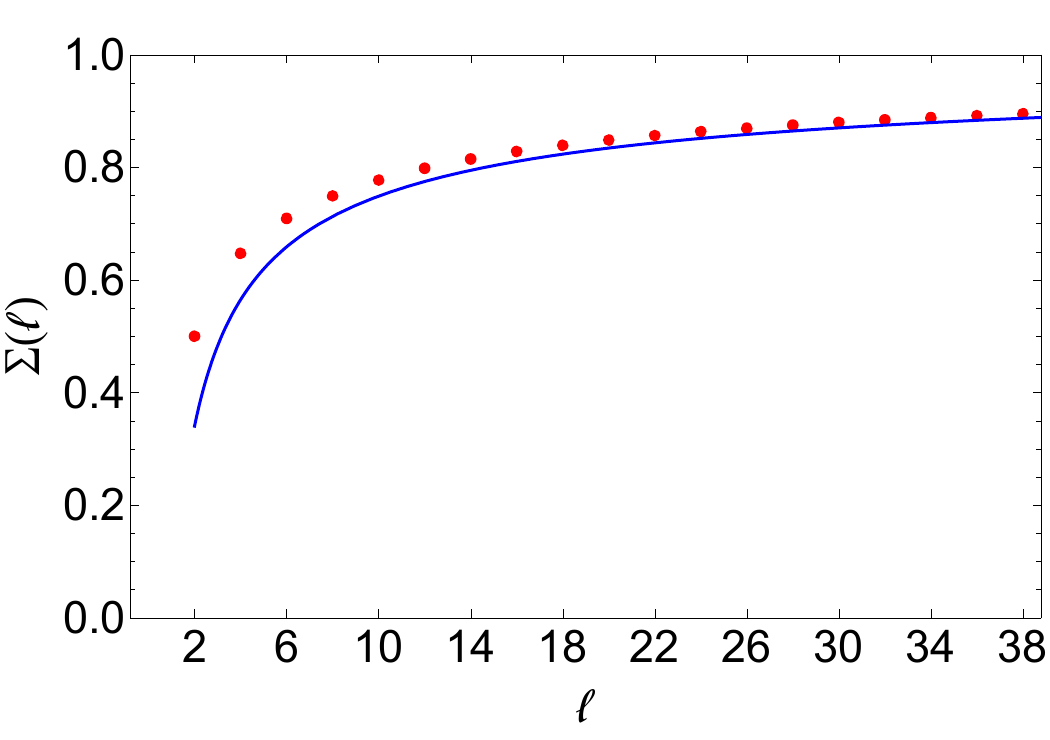}\hspace{6mm}\includegraphics[width=.486\textwidth]{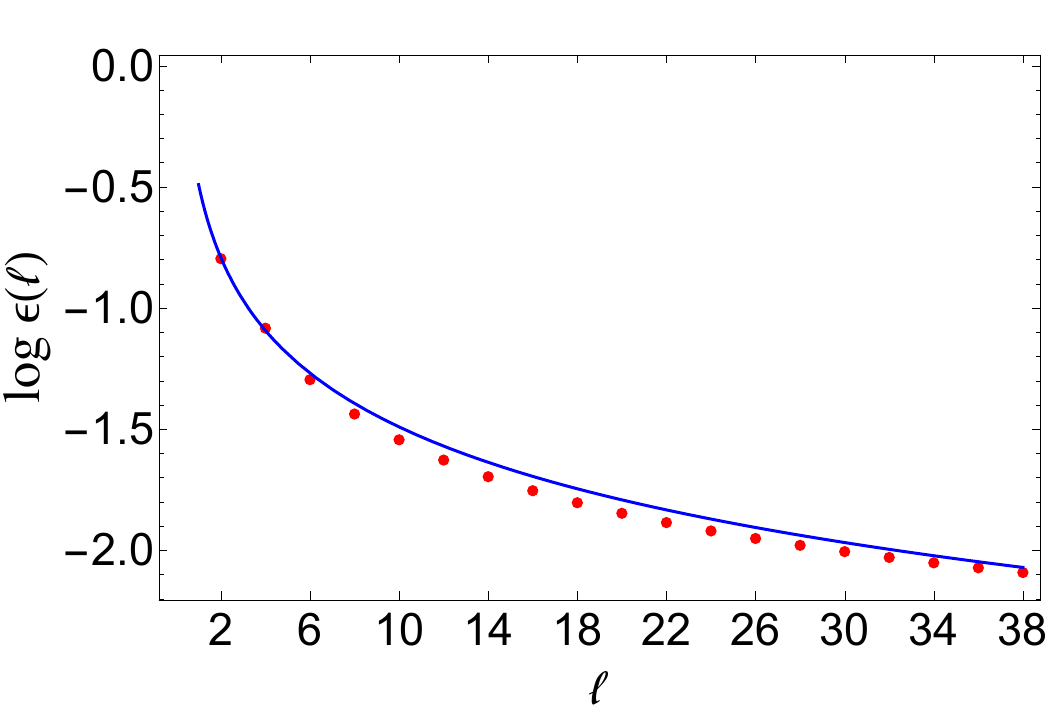}
\vspace{-1mm}
    \caption{Left figure: $S(\ell)$ (red dots), the proportion of closed walks on the infinite square lattice whose last erased loop is a self-avoiding polygon of length at most $\ell$,  as a function of $\ell$. The solid blue line is the conjectured fit by $1-\ell^{-3/5}$, which is supposed to hold asymptotically as $\ell\to\infty$. Right figure: logarithm of the error $\epsilon(\ell) := S(\ell)-(1-\ell^{-3/5})$ (red dots) and fit by the function $0.323/\ell$ (solid blue line).
        Data from Table~\ref{FellTable}.
    }
    \label{FpSum}
\end{figure}

We wrote a second numerical code that can evaluate $F_p$ for one given $p$ of length less than $\ell(p)\leq 1400$. Using this code, we present on Table.~\ref{SquareTable} computed $F_{\text{Sq}_{L\times L}}$ values for the family of $L\times L$ squares on the infinite square lattice as a function of the square's side length $L$. Such results were hitherto completely impossible to access: Theorem~\ref{InfiniteSieve} was unknown until 2021 and only for the six shortest self-avoiding polygons had the fractions $F_p$ been evaluated \cite{Majumdar1991, Manna1992}. The family of squares offers an interesting testing ground for two reasons: we can conjecture the asymptotic growth $F_{\text{Sq}_{L\times L}}$ from both numerical observations and analytical arguments (see below); and empirically we found that $F_{\text{Sq}_{L\times L}}$ is the maximum of $F_p$ values over the set of all self-avoiding polygons of length $4L$. If the length considered is not a multiple of $4$, the self-avoiding polygon with maximum $F_p$ value is a rectangle with a ratio width/height as close to one as possible. At the opposite, the self-avoiding polygons with smallest $F_p$ value are found to be highly winding with 0 interior area.  

The results presented in Table.~\ref{SquareTable} were also calculated \textit{analytically} by a symbolic program in Sage up to and including $\text{Sq}_{7\times 7}$ and by a code for \textsc{Mathematica} up to $\text{Sq}_{30\times 30}$, confirming the numerical results. The analytic formulas are far too cumbersome to be systematically reported here, for example, 
\begin{align*}
F_{\text{Sq}_{3\times 3}} =&\, 26576424-\frac{295147905179352825856}{12301875\, \pi ^{11}}+\frac{191384969764736598016}{2460375\, \pi
   ^{10}}-\frac{156074021897315024896}{1366875\, \pi ^9}\\
   &+\frac{136878648694447013888}{1366875\, \pi
   ^8}-\frac{8851794332131262464}{151875 \pi ^7}+\frac{3588749561696485376}{151875 \,\pi
   ^6}\\&-\frac{38289042343284736}{5625\, \pi ^5}+\frac{870420275786752}{625\, \pi ^4}-\frac{24779053698384}{125\, \pi
   ^3}+\frac{467156948616}{25\, \pi ^2}-\frac{5246537184}{5\, \pi }.
\end{align*}
And the formulas quickly get more involved as the square's side-length increases!

\renewcommand{\arraystretch}{1.05}
\begin{table}
\begin{center}
\begin{tabular}{|c|c||c|c|}
\hline
$L$ & $F_{\text{Sq}_{L\times L}}$&$L$ & $F_{\text{Sq}_{L\times L}}$ \\
\hline
 1 &              $1.8409057387969413\times 10^{-2}$ &
 20 &         $7.234313040400423\times10^{-32}$ \\
 2 &              $4.462339923059934\times 10^{-4}$ &
 30 &        $3.2283875735110397\times10^{-47}$  \\
 3 &              $1.192983879778077\times 10^{-5}$ &
 40 &  $1.4793787629654915\times10^{-62}$ \\
 4 &              $3.2824487567509144\times 10^{-7}$ &
 50 &     $6.877846988396231\times10^{-78}$  \\
5 &             $9.174122974521936\times 10^{-9}$ &
60 &     $3.226927117230214\times10^{-93}$  \\
6 &            $2.5893979305184303\times 10^{-10}$ &
70 &    $1.523552585714086\times10^{-108}$ \\
7&              $7.3577883524995755\times 10^{-12}$&
80 &  $7.226423170276898\times10^{-124}$ \\
8 &            $2.1009188710297932\times 10^{-13}$ &
90 & $3.4396489661899583\times10^{-139}$ \\
9 &          $6.0210656056096115\times 10^{-15}$&
100 &$1.6417501872360198\times10^{-154}$\\
10 &         $1.730587034739647\times 10^{-16}$ &
120 &$3.763918325436204\times10^{-185}$\\
\hline
\end{tabular}
\caption{Numerically computed fraction $F_{\text{Sq}_{L\times L}}$ of closed walks on the infinite square lattice whose last erased loop is the $L \times L$ square $\text{Sq}_{L\times L}$. These numbers were also calculated analytically up to $L=30$. See Fig.~\ref{FpCarre} for a plot of these values.\label{SquareTable}}
\end{center}
\end{table}

\begin{figure}[t!]
     \centering
\includegraphics[width=.6\textwidth]{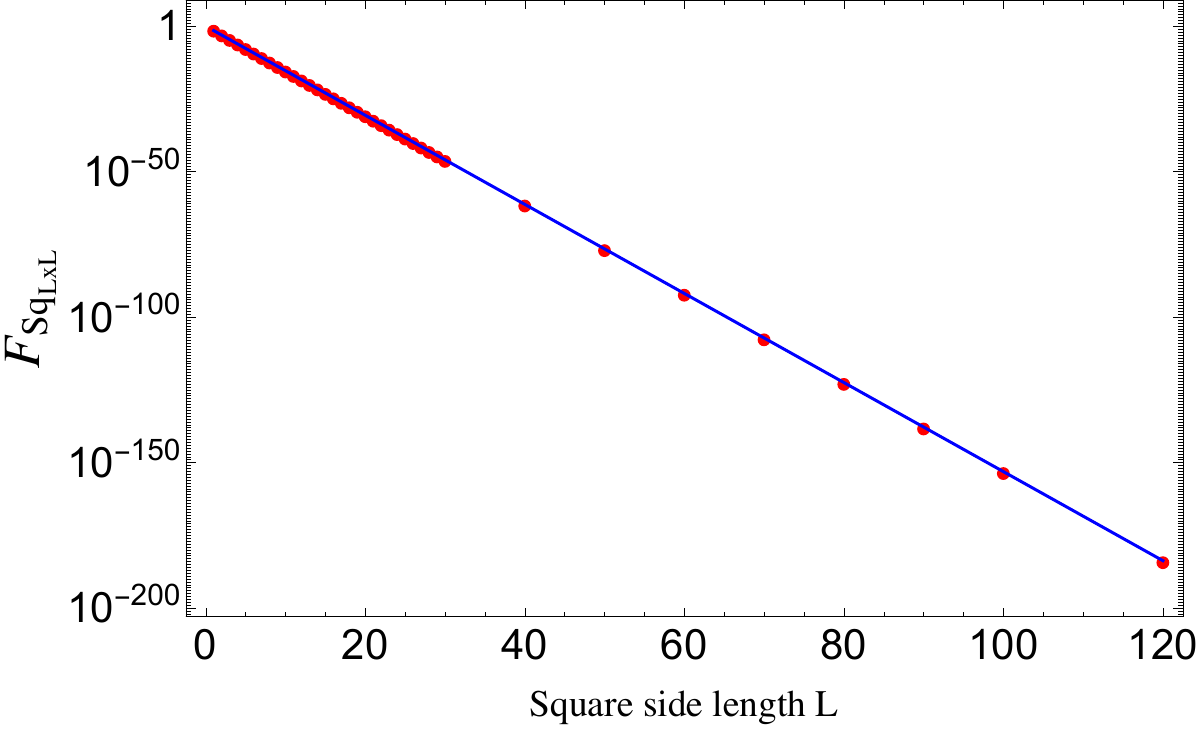}
\vspace{-1mm}
    \caption{Log-plot of the fraction $F_{\text{Sq}_{L\times L}}$ of closed walks on the infinite square lattice whose last erased loop is the $L \times L$ square as a function of the side length $L$ (red dots); and fit based on analytical considerations \S\ref{AnalyticalTriumph} by the function $(\sqrt{2}-1)^{4L}$ (solid blue line). Data from Table~\ref{SquareTable}.
    }
    \label{FpCarre}
\end{figure}

\subsection{Analytical considerations}\label{AnalyticalTriumph}
 Consider the asymptotic behavior of the fraction of closed walks whose last erased loop is the $L\times L$ square as $L\to\infty$. Going back to the roots of the sieve of Theorem~\ref{InfiniteSieve} we have that, on a finite lattice $G_N$ with dominant eigenvalue $\Lambda$, the fraction of all closed walks from a vertex $\bullet$ to itself whose last erased loop is a self-avoiding polygon $p$ visiting $\bullet$ is
$$
F_p=\frac{1}{\Lambda^{\ell(p)}}\frac{\det(\mathsf{Id}-\frac{1}{\Lambda}\mathsf{A}_{G_N\backslash p})}{\det(\mathsf{Id}-\frac{1}{\Lambda}\mathsf{A}_{G_N\backslash\bullet })}.
$$
Here $A_{G_N\backslash\bullet}$ is the adjacency matrix of the finite lattice $G_N$ with vertex $\bullet$ removed. Similarly, $\mathsf{A}_{G_N\backslash p}$ is the adjacency matrix of the finite lattice $G_N$ with all vertices visited by $p$ removed. Since the vertices located in the interior of the self-avoiding polygon $p$ are completely disconnected from those outside of it, the graph $G_N\backslash p$ is disconnected and $F_p$ simplifies to
\begin{equation}
F_p=\frac{1}{\Lambda^{\ell(p)}}\frac{\det(\mathsf{Id}-\frac{1}{\Lambda}\mathsf{A}_{\text{out}(G_N\backslash p)})}{\det(\mathsf{Id}-\frac{1}{\Lambda}\mathsf{A}_{G\backslash\bullet })}\det(\mathsf{Id}-\frac{1}{\Lambda}\mathsf{A}_{\text{in}(G_N\backslash p)}).\label{Fpres1}
\end{equation}
where $\text{in}(G_N\backslash p)$ and $\text{out}(G_N\backslash p)$ designate the set of vertices in the interior and outside of $p$ on $G_N$, respectively.
The induced subgraph of $G_N$ with vertices $\text{in}(G_N\backslash p)$ remains a finite graph under the limit $N\to\infty$ so $\det(\mathsf{Id}-\Lambda^{-1}\mathsf{A}_{\text{in}(G_N\backslash p)})$ is well defined and usually known exactly. For instance on the square lattice and with $p$ the $L\times L$ square, $\text{in}(G_N\backslash \text{Sq}_{L\times L})$ is the $L-2\times L-2$ square, so 
$$
\log\det\big(\mathsf{Id}-\frac{1}{4}\mathsf{A}_{\text{Sq}_{L-2\times L-2}}\big)=\prod_{1\leq i,j\leq L-2}1-\frac{1}{2} \cos \left(\frac{\pi  i}{L-1}\right)-\frac{1}{2} \cos \left(\frac{\pi  j}{L-1}\right).
$$
The asymptotic behavior of this when $L\to\infty$ is known. Denote $\mathsf{L}_{\text{Sq}_{L\times L}}$ the Laplacian of the $L\times L $ square, then 
$$
\det\big(\mathsf{Id}-\frac{1}{4}\mathsf{A}_{\text{Sq}_{L-2\times L-2}}\big)=\frac{4^{-(L-2)^2}}{(L-1)^2}\det\,\!\!_{0}(\mathsf{L}_{\text{Sq}_{L-1\times L-1}}),
$$
where $\det_0$ is the product of the non-zero eigenvalues. It follows \cite[Section 6]{Kenyon2000} \cite{Duplantier1988, pozrikidis2014introduction},
\begin{align*}
\log\det\big(\mathsf{Id}-\frac{1}{4}\mathsf{A}_{\text{Sq}_{L-2\times L-2}}\big)&=\left(\frac{4C}{\pi}-\log(4)\right)\mathcal{A}(\text{Sq}_{L-1\times L-1})+\frac{1}{2}\log(4\sqrt{2}-4)\mathcal{P}(\text{Sq}_{L-1\times L-1})+O(\log L),
\end{align*}

where $C$ is Catalan's constant, $\mathcal{A}(\text{Sq}_{L-1\times L-1})=(L-1)^2$ and $\mathcal{P}(\text{Sq}_{L-1\times L-1})=4(L-1)$ are the area and perimeter of $\text{Sq}_{L-1\times L-1}$, respectively.
The area and perimeter are quantities that are invariant under conformal transformations of the lattice under the continuum limit, leading to the conformal invariance of SLE measures.
Unfortunately, neither of the other two determinants in Eq.~(\ref{Fpres1}) make any sense under the limit where $N:=|G_N|\to\infty$ where $G_N\to G$ becomes infinite. The behavior of their ratio is also very difficult to control. Instead of attempting to do so exactly, as Theorem~\ref{InfiniteSieve} achieved, we may only seek an asymptotic description of these determinants as $N\to\infty$. 
To do so, observe that the two ill-defined determinants' contents can be transformed so as to make graph Laplacians appear. Indeed, 
$$
\det(\mathsf{Id}-\Lambda^{-1}\mathsf{A}_{\text{out}(G_N\backslash p)})=\Lambda^{-N} \det(\mathsf{Id}\Lambda-\mathsf{A}_{\text{out}(G_N\backslash p)}),
$$
which looks like a graph Laplacian, except that all vertices are effectively given a degree $\Lambda$. But vertices of $\mathcal{N}(p)$ have a strictly smaller degree on $G_N\backslash p$, by virtue of the removal of the vertices visited by $p$. Let $\mathsf{D}_p$ be a diagonal matrix accounting for this, that is, for any vertex $i$, define $\mathsf{D}_p$ through
$$
\Lambda= \deg(i_{\text{out}(G_N\backslash p)})+(\mathsf{D}_p)_{i,i},
$$
where $\deg(i_{\text{out}(G_N\backslash p)})$ designates the degree of vertex $i$ on the subgraph of $G_N$ induced by the vertex set $\text{out}(G_N\backslash p)$. By construction, $(\mathsf{D}_p)_{i,i}=0$ whenever $i\notin \mathcal{N}(p)$, i.e. $\mathsf{D}_p$ is a "small-rank and localized" correction, the extent of which does not increase as the lattice size is increased. 
This leads to
\begin{equation}\label{DetAout}
\det(\mathsf{Id}-\Lambda^{-1}\mathsf{A}_{\text{out}(G_N\backslash p)})=\Lambda^{-|\text{out}(G\backslash p)|}\det\,\!\!_{0}(\mathsf{L}_{\text{out}(G\backslash p)} + \mathsf{D}_p),
\end{equation}
and similarly, $\det(\mathsf{Id}-\frac{1}{\Lambda}\mathsf{A}_{G\backslash \bullet})=\Lambda^{|G_N|-1}\det(\mathsf{L}_{G\backslash \bullet}+\mathsf{D}_\bullet)$. 
The determinant of Eq.~(\ref{DetAout}) can be expanded around the dominant $\mathsf{L}_{\text{out}(G\backslash p)}$. Here unfortunately we could not complete that step of the proof by showing that the contribution from $\mathsf{D}_p$ was $O(\log \ell(p) )$ in $\log \det(\mathsf{Id}-\Lambda^{-1}\mathsf{A}_{\text{out}(G_N\backslash p)})$, where $\ell(p)$ is the length of $p$. If we accept that this true we would have

\begin{align*}
    \log(F_p)\stackrel{?}{=} &~(-\ell(p)+|\text{in}(G_N\backslash p)|+\mathcal{P}(p)+1) \log \Lambda +\log\det\,\!\!_{0}(\mathsf{L}_{G_N\backslash p})-\log\det\,\!\!_{0}(\mathsf{L}_{G_N\backslash \bullet} )\\
&\hspace{10mm}+\log\det(\mathsf{Id}-\Lambda^{-1}\mathsf{A}_{\text{in}(G_N\backslash p)})+O(\log \ell(p)).
\end{align*}
Here we used $|\text{out}(G\backslash p)|=|G_N|-|\text{in}(G\backslash p)|-\mathcal{P}(p)$.
The two terms involving graph Laplacians are available asymptotically thanks to the remarkable results of Finski \cite{Finski2022}, Greenblatt \cite{Greenblatt2023} and \cite{Kenyon2000}. For example, on the square lattice and in the case of the $L\times L$ square, we have \cite[Eq.~(1.4) and Theorem 1.2]{Finski2022},
\begin{align*}
\log\det\,\!\!_{0}(\mathsf{L}_{G_N\backslash \text{Sq}_{L\times L}})-\log\det\,\!\!_{0}(\mathsf{L}_{G_N\backslash \bullet}) =& \frac{4C}{\pi}\times-\mathcal{A}(\text{Sq}_{L-1\times L-1})+\frac{1}{2}\log(\sqrt{2}-1)\, \mathcal{P}(\text{Sq}_{L\times L}) +O(\log L).
\end{align*}

Since $|\text{in}(G\backslash p)|=(L-1)^2$ and $\mathcal{P}(\text{Sq}_{L\times L})=4L$,
putting everything together yields
\begin{equation}\label{logF}
\log(F_{\text{Sq}_{L\times L}}) \stackrel{?}{=} -4L \log(4) + 4L \log(4\sqrt{2}-4)+O(\log L).
\end{equation}
Note how all terms proportional to $L^2$ stemming from areas have canceled each other out. It cannot be otherwise: if something proportional to $L^\alpha$ with $\alpha>1$ were to remain, then $F_{\text{Sq}_{L\times L}}$ would diverge as $L\to \infty$, in sharp contradiction with the basic combinatorial requirement of Eq.~(\ref{SumOne}). Overall, Eq.~(\ref{logF}) allows us to conjecture that
$$
F_{\text{Sq}_{L\times L}} \stackrel{?}{=} (\sqrt{2}-1)^{4L} + O(\text{polynom}(L)).
$$
This prediction is borne out by the numerical results, see Fig.~\ref{FpCarre}.

The analytical reasoning for the $-3/5$ exponent appearing the sums $S(\ell)$ is much weaker. If indeed localized corrections can be neglected, then the sum of $F_p$ values could possibly be estimated asymptotically using asymptotic determinants of discrete Laplacians. The problem may then be related to that solved by Kenyon \cite{Kenyon2000} as one of the authors of this work had adventurously proposed in \cite{GISCARD2021}.

\section{Generalization to other lattices}
The computational procedure described in the case of the square lattice in Sections~\ref{Cmatrix} and \ref{AlgoSec} extends to other lattices, provided a suitable recursion formula can be found on the entries of the $\mathsf{C}$ matrix. Indeed, while explicit formulas are known for $\mathsf{C}$ via its relation to lattices' Green's functions in all cases \cite{Atkinson1999, Cserti2000, Cserti2011, Guttmann_2010}, these formulas involve integrals of complicated functions and, as such, are ill-suited to fast, large-scale numerical computations requiring high numerical precision. Instead, it is much easier to implement a direct reconstruction of $\mathsf{C}$ exploiting lattice relations \cite{Klein2002ResistanceDistanceSR} and known values for some family of its entries (in the spirit of Proposition~\ref{RecSquare} and Fig.~\ref{Direction_of_computations} for the square lattice). In this respect the triangular lattice plays a special role since many relations are known relating entries of its resistance matrix to entries of the resistance matrices of other lattices including the Kagomé and dice ones \cite{Cserti2011}. Unfortunately, as noted by the author of \cite{Cserti2000}, while an explicit integral formula is known giving all resistances on the triangular lattice, no recurrence relation is known between these values. Consequently, these cannot be efficiently reconstructed by a computational procedure similar to that presented here for the square lattice. We remove this roadblock by not only providing such a recurrence but also solving it explicitly thus obtaining a novel expression for the resistance on the triangular lattice. We also solve the open problem of determining the asymptotic behavior of the resistance between two points of this lattice when their distance $d$ grows to infinity $d\to\infty$. This results not only answer existing question on the infinite resistor triangular lattice but they also establish the validity of our computational approach for the calculation of fractions $F_p$ to a vast array of lattices.

We first recall the analytical results concerning the resistance values on the triangular lattice:
\begin{figure}
\begin{center}
\def\xsize{4}
\def\ysize{4}

\begin{tikzpicture}[line width=0.1,x=1.4cm,y=1.4cm]
\begin{scope}
\clip(0,0) rectangle (\xsize,{\ysize*sqrt(3)/2});
\foreach \y in {0,...,\ysize}\draw[color=lightgray](0,{sqrt(3)*\y/2})  -- ++ (0:\xsize);
\foreach \y in {0,2,...,\ysize}{
\draw[color=lightgray](0,{sqrt(3)*\y/2})  -- ++ (60:2*\ysize);
\draw[color=lightgray](0,{sqrt(3)*\y/2})  -- ++ (-60:2*\ysize);
}
\foreach \x in {1,...,\xsize}{
\draw[color=lightgray](\x,0)  -- ++ (60:2*\ysize);
\draw[color=lightgray](\x,{\ysize*sqrt(3)/2})  -- ++ (-60:2*\ysize);
}
\end{scope}

\filldraw[fill=white](2,{sqrt(3)}) coordinate (O) circle(0.1);
\filldraw[fill=black](2.5,{1.5*sqrt(3)}) coordinate (A) circle(0.06);
\filldraw[fill=black](1.5,{1.5*sqrt(3)}) coordinate (B) circle(0.06);
\filldraw[fill=black](1,{sqrt(3)}) coordinate (C) circle(0.06);
\filldraw[fill=black](1.5,{0.5*sqrt(3)}) coordinate (D) circle(0.06);
\filldraw[fill=black](2.5,{0.5*sqrt(3)}) coordinate (E) circle(0.06);
\filldraw[fill=black](3,{sqrt(3)}) coordinate (F) circle(0.06);

\draw[line width=1,->](O) -- node[midway,left]{$v$} ++ (120:0.92);
\draw[line width=1,->](O) -- node[midway,above]{$u$} ++ (0:0.92);

\filldraw[fill=white](2,{sqrt(3)}) coordinate (O) circle(0.1);

\draw[fill=white](O) node[xshift=0pt,yshift=-10pt]{\footnotesize$(i,j)$};
\draw[fill=white](A) node[xshift=24pt,yshift=5pt]{\footnotesize$(i\!+\!1,j\!+\!1)$};
\draw[fill=white](B) node[xshift=-24pt,yshift=5pt]{\footnotesize$(i,j\!+\!1)$};
\draw[fill=white](C) node[xshift=-20pt,yshift=5pt]{\footnotesize$(i\!-\!1,j)$};
\draw[fill=white](D) node[xshift=-24pt,yshift=-5pt]{\footnotesize$(i\!-\!1,j\!-\!1)$};
\draw[fill=white](E) node[xshift=24pt,yshift=-5pt]{\footnotesize$(i\!+\!1,j\!-\!1)$};
\draw[fill=white](F) node[xshift=20pt,yshift=5pt]{\footnotesize$(i\!+\!1,j)$};

\end{tikzpicture}
\end{center}
\caption{\label{CoordTri}Coordinate system on triangular lattice.}
\end{figure}
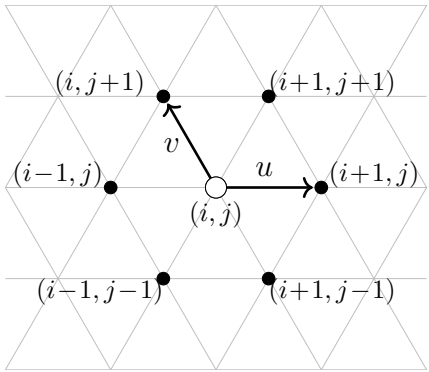

\begin{prop}[\cite{Cserti2000}]
Let $R(n,m)$ be the resistance from the origin $O$ to the point with coordinates $(n,m)$ on the triangular lattice (see Fig.~\ref{CoordTri}). Then
$$
R(n,m)=\frac{1}{\pi}\int_0^{\pi/2}\frac{1-e^{-|n-m|s}\cos\big((n+m)s\big)}{\sinh(s)\cos(x)}dx,
$$
where $\cosh(s)=2\sec(x)-\cos(x)$. In particular for $n=m$ this simplifies to
$$
R(n,n)= \frac{2}{\pi}\int_0^{\pi/2}\frac{\sin(n x)^2}{\sin(x) \sqrt{4-\cos(x)^2}}dx.
$$
\end{prop}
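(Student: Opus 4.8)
The plan is to reduce the two-dimensional lattice Green's function of the triangular lattice to a one-dimensional integral by integrating out one reciprocal variable with the residue theorem, and then to obtain the diagonal case by a double-angle identity. Write $(e_1,e_2)$ for the generators of the lattice, so that the six neighbours of a vertex are $\pm e_1,\pm e_2,\pm(e_1+e_2)$. In the Fourier variables $(x,y)\in[-\pi,\pi]^2$ dual to $(e_1,e_2)$ the graph Laplacian has symbol $6-2\cos x-2\cos y-2\cos(x+y)$, and the standard spectral representation of the effective resistance --- equivalently $\mathsf C=-\tfrac12\mathsf r$ (Eq.~(\ref{CRrelation})) together with the Fourier diagonalisation of the lattice Laplacian --- gives
$$
R(n,m)=\frac{1}{(2\pi)^2}\int_{-\pi}^{\pi}\!\!\int_{-\pi}^{\pi}\frac{1-\cos(nx+my)}{3-\cos x-\cos y-\cos(x+y)}\,dx\,dy,
$$
the integrand being read as the finite difference $G(0,0)-G(n,m)$ even though the two-dimensional Green's function itself diverges logarithmically. (As a sanity check, for the neighbour $(n,m)=(1,1)$ the final formula returns $\tfrac13$, the known triangular-lattice resistance to an adjacent vertex.)

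The decisive step is the change to diagonal coordinates $p=x+y$, $q=x-y$, which is the reciprocal-space image of the splitting $n e_1+m e_2=\tfrac{n+m}{2}(e_1+e_2)+\tfrac{n-m}{2}(e_1-e_2)$. Then $\cos x+\cos y=2\cos\tfrac p2\cos\tfrac q2$ and $\cos(x+y)=\cos p$, so the denominator becomes $(3-\cos p)-2\cos\tfrac p2\cos\tfrac q2$, which is affine in $\cos\tfrac q2$, while $nx+my=\tfrac{n+m}{2}p+\tfrac{n-m}{2}q$. Substituting $q=2r$, the inner integral over $r$ is of the elementary type $\int_{-\pi}^{\pi}\frac{\cos(kr)}{a-b\cos r}\,dr=\frac{2\pi}{\sqrt{a^2-b^2}}\Big(\frac{a-\sqrt{a^2-b^2}}{b}\Big)^{|k|}$ with $a=3-\cos p$, $b=2\cos\tfrac p2$, $k=n-m$ (the $\sin(kr)$ part is odd in $r$ and drops out). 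Defining $s=s(p)>0$ by $\cosh s=a/b$ turns $\sqrt{a^2-b^2}$ into $b\sinh s$ and $\tfrac{a-\sqrt{a^2-b^2}}{b}$ into $e^{-s}$; this is the origin of the factor $e^{-|n-m|s}$.

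It remains to bring the surviving $p$-integral into the stated shape. Rescaling $p=2x$ converts $\cosh s=\frac{3-\cos p}{2\cos(p/2)}$ into $\cosh s=\frac{3-\cos 2x}{2\cos x}=\frac{2-\cos^2 x}{\cos x}=2\sec x-\cos x$, converts $\cos\big(\tfrac{n+m}{2}p\big)$ into $\cos((n+m)x)$, and, after using evenness in $p$ and collecting the constants, yields $R(n,m)=\tfrac1\pi\int_0^{\pi/2}\frac{1-e^{-|n-m|s}\cos((n+m)x)}{\sinh(s)\cos x}\,dx$. The diagonal case follows by specialisation: for $n=m$ one has $e^{-|n-m|s}=1$ and $\cos((n+m)x)=\cos(2nx)=1-2\sin^2(nx)$, so the numerator is $2\sin^2(nx)$; and from $\cosh s=2\sec x-\cos x$ one computes $\sinh^2 s=\cosh^2 s-1=\frac{(2-\cos^2 x)^2-\cos^2 x}{\cos^2 x}=\frac{(1-\cos^2 x)(4-\cos^2 x)}{\cos^2 x}$, hence $\sinh(s)\cos x=\sin x\sqrt{4-\cos^2 x}$, which gives $R(n,n)=\tfrac2\pi\int_0^{\pi/2}\frac{\sin^2(nx)}{\sin x\sqrt{4-\cos^2 x}}\,dx$.

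The trigonometry is routine; the two points that need care are analytic and bookkeeping. First, since the planar Green's function diverges, the whole computation must be phrased through the regularised quantity $G(0,0)-G(n,m)$ (equivalently through the matrix $\mathsf C$ of Section~\ref{Cmatrix}), and one must justify exchanging the finite-lattice limit with the integrations. Second, the substitution $(x,y)\mapsto(p,q)$ is not unimodular: its Jacobian is $2$ and the period lattice in $(p,q)$ is generated by $(2\pi,2\pi)$ and $(2\pi,-2\pi)$, so one must integrate over a correct fundamental domain --- for instance $p\in(-\pi,\pi)$, $q\in(-2\pi,2\pi)$ --- for the constant in front to come out equal to $1/\pi$. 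Both are standard once set up, and everything else is the residue computation above. This is the step I expect to be the main obstacle, in the sense that it is the only place where genuine analysis (rather than algebra) enters.
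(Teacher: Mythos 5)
The paper states this proposition as an imported result from \cite{Cserti2000} and gives no proof of its own, so there is nothing internal to compare against; your derivation is the standard Fourier/Green's-function route used in that reference (spectral representation of $G(0,0)-G(n,m)$, diagonal change of variables, residue evaluation of the inner integral, and the substitution $\cosh s=2\sec x-\cos x$), and it is correct, including the two caveats you flag about regularisation and the non-unimodular Jacobian. As a bonus, your computation shows that the first display as printed contains a typo --- the numerator should read $\cos\big((n+m)x\big)$ rather than $\cos\big((n+m)s\big)$ --- which is confirmed by the fact that only the corrected version specialises to the stated $R(n,n)$ formula and reproduces $R(1,1)=1/3$.
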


Of central importance in the computational procedure for the evaluation of $\mathsf{C}$ is the determination of these $R(n,n)$, ideally without integrals, since from these all $R(n,m)$ can be recursively calculated using sum rules and lattice symmetries \cite{Klein2002ResistanceDistanceSR}. The following Proposition identifies a recurrence (conjectured to exist by \cite{Cserti2000}) to bypass the integral formulation altogether; identifies its explicit solution in terms of hypergeometric functions and proves the conjectured \cite{Cserti2000} logarithmic divergence of $R(n,n)$:  

\begin{prop}\label{Prop_induction_sur_r}
Let $r_n:=R(n,n)$ and define $r_{-1}=0$ for convenience. Then, on the triangular lattice we have $r_0=0$, $r_1=1/3$ and for $n\geq 2$,
\begin{align}\label{induction_sur_r}
r_n&=\frac{15n-22}{n-1}r_{n-1}-\frac{15n-23}{n-1}r_{n-2}+\frac{n-2}{n-1}r_{n-3}-\frac{4\sqrt{3}}{\pi(n-1)}.
\end{align}
It follows that $r_n$ is available exactly from a finite number of sums, ratios and products of rational numbers and a single multiplication by the irrational $\sqrt{3}/\pi$.
The solution of this is, for $n\geq 0$,
\begin{align*}
r_n=&\frac{n}{3}H(n)-\frac{4 \sqrt{3}}{\pi}\sum_{m=1}^n (n-m) H(n-m)H(m),
\end{align*}
where $H(n):=\,_3F_2\left(\frac{1}{2},-n+1,n+1;1,\frac{3}{2};-3\right)\in\mathbb{Q}$. Here $\,_3F_2$ designates a generalized hypergeometric function \cite{bailey1935generalized}.
Finally we have, asymptotically as $n\to+\infty$,
$$
r_n =\frac{1}{\sqrt{3}\,\pi}\log(n)+\frac{ \gamma +\log (2\sqrt{3})}{ \sqrt{3}\, \pi }+O(1/n),
$$
where $\gamma$ is the Euler–Mascheroni constant.
\end{prop}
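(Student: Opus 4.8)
The plan is to prove the three assertions (the recurrence together with $r_0,r_1$, the closed form, and the asymptotics) in turn, each time starting from the integral formula $r_n=\tfrac2\pi\int_0^{\pi/2}\frac{\sin^2(nx)}{\sin x\,\sqrt{4-\cos^2 x}}\,dx$.

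\emph{The recurrence.} Put $A_k:=\int_0^{\pi/2}\frac{\sin((2k-1)x)}{\sqrt{4-\cos^2 x}}\,dx$. Using $\sin^2(nx)=\tfrac12(1-\cos 2nx)$ and $\cos\bigl(2(k-1)x\bigr)-\cos(2kx)=2\sin x\,\sin\bigl((2k-1)x\bigr)$ one gets $r_n-r_{n-1}=\tfrac2\pi A_n$, hence $r_n=\tfrac2\pi\sum_{k=1}^nA_k$; in particular $r_0=0$ and, by $u=\cos x$, $r_1=\tfrac2\pi A_1=\tfrac13$. The engine is the identity obtained by integrating $\frac{d}{dx}\big[\sqrt{4-\cos^2 x}\,\cos((2k-1)x)\big]$ over $[0,\pi/2]$: the boundary term is $-\sqrt3$, and after writing $\sqrt{4-\cos^2 x}=\tfrac{4-\cos^2 x}{\sqrt{4-\cos^2 x}}$ and reducing every trigonometric product to the three terms $\sin((2k-3)x),\sin((2k-1)x),\sin((2k+1)x)$ one arrives at $kA_{k+1}-7(2k-1)A_k+(k-1)A_{k-1}=-2\sqrt3$ for $k\ge1$. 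Rescaling to $a_j:=\tfrac2\pi A_j=r_j-r_{j-1}$, putting $k=n-1$, and substituting $a_j=r_j-r_{j-1}$ for $j=n,n-1,n-2$ yields \eqref{induction_sur_r} after collecting coefficients; the convention $r_{-1}=0$ is immaterial since its coefficient $\tfrac{n-2}{n-1}$ vanishes at $n=2$. The exactness assertion then follows by induction, \eqref{induction_sur_r} having rational coefficients and inhomogeneity $-\tfrac{4\sqrt3}{\pi(n-1)}$, so each $r_n$ lies in $\mathbb Q+\mathbb Q\cdot\tfrac{\sqrt3}\pi$ with rational parts computed exactly.

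\emph{The closed form.} I would solve \eqref{induction_sur_r} by generating functions. With $\alpha(x)=\sum_{n\ge1}a_nx^n$, the recurrence on the $a_n$ transcribes into the linear ODE $x(1-14x+x^2)\alpha'(x)+(7x-1)\alpha(x)=-\tfrac{4\sqrt3}{\pi}\,\tfrac{x^2}{1-x}$; its integrating factor is $\sqrt{1-14x+x^2}/x$, and integrating while imposing $\alpha(x)\sim\tfrac13x$ near $0$ gives $\alpha(x)=\tfrac{x}{\sqrt{1-14x+x^2}}\big(\tfrac13-\tfrac{4\sqrt3}\pi\Phi(x)\big)$ with $\Phi(x):=\int_0^x\frac{dt}{(1-t)\sqrt{1-14t+t^2}}$. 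Since $r_n=\sum_{k\le n}a_k$, one has $\sum_{n\ge1}r_nx^n=\alpha(x)/(1-x)$; and because $7-4\sqrt3$ is the smaller root of $1-14x+x^2$, the Legendre generating function $\tfrac1{\sqrt{1-14x+x^2}}=\sum_{n\ge0}P_n(7)x^n$ identifies $\tfrac{x}{(1-x)\sqrt{1-14x+x^2}}=\sum_{n\ge1}nH(n)x^n$ and $\Phi(x)=\sum_{n\ge1}H(n)x^n$ with $H(n)=\tfrac1n\sum_{j=0}^{n-1}P_j(7)$, and a Cauchy product produces exactly $r_n=\tfrac n3H(n)-\tfrac{4\sqrt3}\pi\sum_{m=1}^n(n-m)H(n-m)H(m)$. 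To recognise $H(n)$ as the stated terminating ${}_3F_2$ one combines $P_j(7)=\sum_i\binom ji\binom{j+i}i3^i$ with the identity $\sum_{j=i}^{n-1}\binom ji\binom{j+i}i=\tfrac{n}{2i+1}\binom{n-1}i\binom{n+i}i$ (a one-line induction on $n$), which rewrites $H(n)=\sum_{i\ge0}\tfrac{\binom{n-1}i\binom{n+i}i}{2i+1}3^i={}_3F_2(\tfrac12,-n{+}1,n{+}1;1,\tfrac32;-3)\in\mathbb Q$.

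\emph{The asymptotics.} For the $n\to\infty$ behaviour I would return to the integral. Writing $\frac1{\sin x\sqrt{4-\cos^2 x}}=\frac1{\sqrt3\,x}+g(x)$, the function $g$ extends smoothly to $[0,\pi/2]$ with $g(x)=O(x^3)$ at $0$; hence, with $\sin^2(nx)=\tfrac12(1-\cos2nx)$ and $u=2nx$,
\[
r_n=\frac1{\sqrt3\,\pi}\int_0^{n\pi}\frac{1-\cos u}{u}\,du+\frac1\pi\int_0^{\pi/2}(1-\cos2nx)\,g(x)\,dx .
\]
The first integral equals $\log(n\pi)+\gamma-\mathrm{Ci}(n\pi)=\log n+\log\pi+\gamma+O(1/n)$ (with $\mathrm{Ci}$ the cosine integral), while in the second one integration by parts (the boundary terms vanishing, as $\sin(n\pi)=0$ and $g(0)=0$) leaves $\tfrac1\pi\int_0^{\pi/2}g+O(1/n)$ by Riemann–Lebesgue. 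It remains to evaluate $B_1:=\tfrac1\pi\int_0^{\pi/2}g(x)\,dx$: with $u=\cos x$ and $\tfrac1{1-u^2}=\tfrac12(\tfrac1{1-u}+\tfrac1{1+u})$, the substitutions $v=\tfrac1{1-u}$ and $w=\tfrac1{1+u}$ reduce the relevant antiderivative to $\tfrac1{2\sqrt3}\big(\cosh^{-1}\tfrac{4-u}{2(1-u)}-\cosh^{-1}\tfrac{4+u}{2(1+u)}\big)$, whose regularised limit as $u\to1$ (against the subtracted $\tfrac1{\sqrt3}\log\tfrac{\pi/2}{\varepsilon}$) is $B_1=\tfrac1{\sqrt3\,\pi}\log\tfrac{2\sqrt3}\pi$. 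Adding the two contributions gives $r_n=\tfrac1{\sqrt3\,\pi}\log n+\tfrac{\gamma+\log(2\sqrt3)}{\sqrt3\,\pi}+O(1/n)$.

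\emph{The main difficulty.} The heart of the argument is the second step: recognising the ODE solution as the advertised hypergeometric sequence and confirming that $-\tfrac{4\sqrt3}\pi\sum_m(n-m)H(n-m)H(m)$ is genuinely a particular solution of \eqref{induction_sur_r}. Both rest on the special value $\Phi(7-4\sqrt3)=\tfrac{\pi}{12\sqrt3}$, equivalently the vanishing of $\tfrac13-\tfrac{4\sqrt3}\pi\Phi(x)$ at the branch point $x=7-4\sqrt3$; this is exactly what removes that branch point from $\sum r_nx^n$, leaving the pole at $x=1$ as the dominant singularity and thus forcing the growth of $r_n$ to be logarithmic rather than exponential (so a singularity‑analysis route to the third step, with the same constant, is also available). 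The other place requiring care is the closed‑form evaluation of $B_1$ together with the regularisation at $x=0$; the remaining manipulations are routine.
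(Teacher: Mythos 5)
Your proposal is correct, and the final formulas all agree with the paper's (I checked the recurrence for $A_k$, the ODE and its integrating factor, the coefficient extraction, and the constant $\tfrac{1}{\sqrt3\pi}\log\tfrac{2\sqrt3}{\pi}$). For the recurrence you use the same underlying mechanism as the paper — an exact derivative $\tfrac{d}{dx}\bigl[\sqrt{4-\cos^2x}\,\cos((2k-1)x)\bigr]$ whose boundary values produce the $\sqrt3/\pi$ inhomogeneity — but you route it through the first differences $a_n=r_n-r_{n-1}=\tfrac2\pi A_n$, which turns the paper's four-term identity among the $f_n$ into a cleaner three-term relation for the $A_k$; the two are equivalent after telescoping. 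The genuinely different step is the closed form: the paper simply \emph{verifies} that the coefficients $c(n,k)=\tfrac{1}{(2k+1)(k!)^2}\tfrac{(n+k)!}{(n-k-1)!}$ satisfy a contiguous relation implying that $nQ_n(-3)$ solves the homogeneous recurrence and that the convolution solves the inhomogeneous one, whereas you \emph{derive} the answer by solving the first-order ODE for $\sum a_nx^n$ and reading off coefficients via the Legendre generating function $1/\sqrt{1-14x+x^2}=\sum P_j(7)x^j$. Your route explains where $H(n)$ comes from ($nH(n)=\sum_{j<n}P_j(7)$, rewritten as a terminating ${}_3F_2$ via the partial-sum identity, whose induction step I confirmed) and gives the singularity-analysis picture for free, at the cost of one extra binomial identity; the paper's route is shorter but unmotivated. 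For the asymptotics both arguments subtract the $1/(\sqrt3\,x)$ singularity and evaluate the same antiderivative; your integration-by-parts treatment of the remainder is if anything tidier than the paper's two-sided bound. One small caveat on your closing commentary: the coefficient extraction in step two is a purely formal power-series computation and does not actually rest on the special value $\Phi(7-4\sqrt3)=\pi/(12\sqrt3)$; that identity only becomes relevant if you pursue the alternative singularity-analysis route to the asymptotics.
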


\begin{proof}
First we prove the recurrence.    Observe that, since $2\cos^2(x)-\cos(2x)=1$, we have $r_n=\frac{2\sqrt{2}}{\pi}\int_0^{\pi/2}\frac{\sin(n x)^2}{\sin(x) \sqrt{7-\cos(2x)}}dx$. Let $f_n(x):=\frac{\sin(n x)^2}{\sin(x) \sqrt{7-\cos(2x)}}$. We claim
    \begin{align*}
        f_n(x) &= \frac{15 n-22}{n-1}f_{n-1}(x)-\frac{15 n-23}{n-1}f_{n-2}(x)+\frac{n-2}{n-1}f_{n-3}(x)\\
        &\hspace{10mm}+\frac{1}{n-1} \frac{d}{dx}\left\{\sqrt{7-\cos (2 x)}\, \cos \big((2 n-3) x\big)\right\}.
    \end{align*}
    Actually, it suffices to show that 
    \begin{align*}
    (n-1)\sin(nx)^2&=(15n-22)\sin((n-1)x)^2-(15n-23)\sin((n-2)x)^2+(n-2)\sin((n-3)x)^2\\
    &+\cos((2n-3)x)\sin(2x)\sin(x)-(2n-3)(7-\cos(2x))\sin(x)\sin((2n-3)x).
    \end{align*}
    However, since $2\sin(\theta)^2=1-\cos(2\theta)$, it is equivalent to 
    \begin{align*}
        (n-1)\cos(2nx)&=(15n-22)\cos(2(n-1)x)-(15n-23)\cos(2(n-2)x)+(n-2)\cos(2(n-3)x)\\
            &+\cos((2n-3)x)\sin(2x)\sin(x)-(2n-3)(7-\cos(2x))\sin(x)\sin((2n-3)x).
    \end{align*}
    This relation is clear using the Product to Sum identities ($2\cos(\theta)\cos(\mu)=\cos(\theta-\mu)+\cos(\theta+\mu)$ and $2\sin(\theta)\sin(\mu)=\cos(\theta-\mu)+\cos(\theta+\mu)$) on the the RHS of the relation.\\

Second we establish the solution. We begin by considering the homogeneous recurrence equation in $h_n$ for $n\geq 2$,
\begin{equation}\label{recHomogen}
h_n=\frac{15n-22}{n-1}h_{n-1}-\frac{15n-23}{n-1}h_{n-2}+\frac{n-2}{n-1}h_{n-3},
\end{equation}
and with  $h_{-1}=0$, $h_0=0$ and $h_1=1/3$.
Define for $n,k,\in\mathbb{N}$, 
$$
c(n,k):=\frac{1}{(2 k+1) (k!)^2}\frac{(n+k)!}{(n-k-1)!},
$$ and observe that for all integers $n,k\geq 2$,
\begin{align}\label{relC}
&c(n,k)=\frac{3 n-4}{n-1}c(n-1,k)-\frac{4 n-6}{n-1}c(n-1,k-1)\\
&\hspace{15mm}-\frac{3 n-5 }{n-1}c(n-2,k)+\frac{4 n-6 }{n-1}c(n-2,k-1)+\frac{n-2}{n-1}c(n-3,k),\nonumber
\end{align}
noting that $c(-1,k)=0$ for all $k\geq 1$.
Since furthermore
$$
n\, Q_n(z):=\sum_{k=0}^n c(n,k) z^k=n \, _3F_2\left(\frac{1}{2},1-n,n+1;1,\frac{3}{2};z\right),
$$
then Eq.~(\ref{relC}) implies 
\begin{align*}
Q_n(z) &=\left(-\frac{4 n-6}{n-1}z+\frac{3 n-4}{n-1}\right) (n-1)Q_{n-1}(z) \\
&\hspace{5mm}+\left(\frac{4 n-6}{n-1} z-\frac{3 n-5}{n-1}\right)(n-2)Q_{n-2}(z)+\frac{n-2}{n-1} (n-3)Q_{n-3}(z).
\end{align*}
This becomes Eq.~(\ref{recHomogen}) for $z=-3$, showing that the solution of the homogeneous recurrence relation Eq.~(\ref{recHomogen}) is $h_n=n\,Q_n(-3)$ and $H(n)=Q_n(-3)$.

The solution to the inhomogeneous equation has a similar origin. Eq.~(\ref{relC}) implies that $P_n(z):=\sum_{m=1}^n (n-m)Q_{n-m}(z) Q_m(z)$ satisfies
$$
P_n(z) =\left(-\frac{4 n-6}{n-1}z+\frac{3 n-4}{n-1}\right) P_{n-1}(z) +\left(\frac{4 n-6}{n-1} z-\frac{3 n-5}{n-1}\right)P_{n-2}(z)+\frac{n-2}{n-1} P_{n-3}(z) +\frac{1}{n-1},
$$
implying that $-(4\sqrt{3}/\pi)P_n(-3)=-(4\sqrt{3}/\pi)\sum_{m}(n-m)H(n-m)H(m)$ is a particular solution to the inhomogeneous equation.

Finally, we  prove that $r_n$ diverges logarithmically as $n\to+\infty$. Observe that 
$$
\frac{1}{\sin(x) \sqrt{4-\cos(x)^2}}=\frac{1}{\sqrt{3} x}+\frac{4 x^3}{45 \sqrt{3}}-\frac{2 x^5}{63 \sqrt{3}}+O\left(x^6\right),
$$
thus
$$
\frac{2}{\pi}\int_0^{\pi/2}\frac{1}{\sqrt{3} x}\sin(n x)^2 dx\leq r_n\leq\frac{2}{\pi}\int_0^{\pi/2}\left(\frac{1}{\sqrt{3} x}+\frac{4 x^3}{45 \sqrt{3}}\right)\sin(n x)^2 dx.
$$
Evaluating both integrals and multiplying everything by $\sqrt{3} \pi$ for ease of presentation we get
\begin{align*}
&-\text{Ci}(n \pi )+\gamma +\log (\pi )+\log (n)\leq \sqrt{3} \pi\, r_n\leq-\text{Ci}(n \pi )+\gamma +\log (\pi )+\log (n)\\&\hspace{80mm}+\frac{24 \left((-1)^n-1\right)}{720 n^4}-\frac{12 \pi ^2 (-1)^n}{720 n^2}+\frac{\pi ^4}{720}.
\end{align*}
Here $\gamma$ is the Euler–Mascheroni constant and $\text{Ci}(.)$ is the cos-integral function for which we have $\lim_{n\to+\infty}\text{Ci}(n \pi)= 0$. It follows from the above that asymptotically as $n\to+\infty$, 
$
r_n\sim \log(n)/(\sqrt{3}\pi)+c+O(1/n).$
The value of $c$ is determined by the observation that for $q\geq 0$,
$$
\lim_{n\to\infty}\int_0^{\pi/2} x^q \sin(nx)^2 dx =\frac{1}{2}\times \frac{(\pi/2) ^{q+1}}{q+1}.
$$
Given that $\int x^q dx = x^{q+1}/(q+1)$ and since 
$$
F(x):=\int \left(\frac{1}{\sin(x) \sqrt{4-\cos(x)^2}}-\frac{1}{\sqrt{3} x}\right) dx = \frac{-\log (x)}{\sqrt{3}}-\frac{1}{\sqrt{3}}\tanh ^{-1}\left(\frac{\sqrt{3} \cos (x)}{\sqrt{4-\cos ( x)^2}}\right),
$$
then we have
$$
\lim_{n\to\infty}\int_0^{\pi/2} \left(\frac{1}{\sin(x) \sqrt{4-\cos(x)^2}}-\frac{1}{\sqrt{3} x}\right) \sin(nx)^2 dx =\frac{1}{2} \big(F(\pi/2)-F(0)\big),
$$
where we understand $F(0)$ as the limit $F(0):=\lim_{x\to 0} F(x) = -\log (\sqrt{3})/ \sqrt{3}$ while $F(\pi/2)=-\log \left(\pi/2\right)/\sqrt{3}$. Gathering everything we get
$$
c= \frac{\gamma+\log(\pi)}{\sqrt{3}\pi}+\frac{2}{\pi}\times \frac{1}{2}\times \left(-\frac{\log \left(\pi/2\right)}{\sqrt{3}}+\frac{\log (\sqrt{3})}{ \sqrt{3}}\right),
$$
which simplifies to the promised result.

\end{proof}

\section*{Acknowledgements}
 P.-L. G. and Y. H. are supported by the French National Research Agency ANR-19-CE40-0006 project ALgebraic COmbinatorics of Hikes On Lattices (\textsc{Alcohol}). Y. H. is further supported by an Université du Littoral Côte d'Opale postdoctoral grant.

\bibliographystyle{plain}  
\bibliography{references}

\end{document}